\title{DECOMPOSITION OF LOOP SPACES AND PERIODIC PROBLEM ON $\pi_*$}
\author{Weidong Chen}
\address{}
\email{chenwd@nus.edu.sg}
\urladdr{}
\author{Jie Wu}
\address{}
\email{matwuj@nus.edu.sg}
\urladdr{}
\newtheorem{thm}{Theorem}[section]    % Standard theorem environment
\newtheorem{prop}[thm]{Proposition} 
\newtheorem{coro}[thm]{Corollary}
\theoremstyle{definition}
\newtheorem*{ack}{Acknowledgements} 
\def\co{\colon\thinspace}
\DeclareMathOperator*{\hocolim}{hocolim}
\begin{document}

\begin{abstract}    % type your abstract below
We provide a family of spaces localized at $2$,
whose stable homotopy groups are summands of their unstable homotopy groups.
Application to mod--$2$ Moore spaces are given.
\end{abstract}

\maketitle

%%%%%%%%%%%%%%%%%%%%   Start of main body of article

\section{Introduction}

Homotopy theory is a central topic in the area of algebraic topology.
Understanding the relationship between the stable homotopy groups and unstable homotopy groups is an important question in homotopy theory.
Let $X$ be a $n$--connected pointed space.
Recall that the classical Freudenthal suspension theorem \cite{MR1556999} states that the canonical map
\begin{align*}
\pi_k(X) \rightarrow \pi_k^s(X)
\end{align*}
is an isomorphism if $k \leq 2n$ and an epimorphism if $k = 2n + 1$.
%The Freudenthal suspension theorem gives a canonical relationship between unstable homotopy groups and stable homotopy groups.
The Freudenthal suspension theorem relates the unstable homotopy groups to the stable homotopy groups.

%Recently, a new interesting problem has risen to find spaces $X$ such that the stable homotopy groups are summands of the unstable homotopy groups.
%Some of the spaces were provided by Beben and Wu \cite{mrth}, where an application to the Moore conjecture was also given.
Recently, a new interesting problem in this area has been proposed.
Namely, find spaces $X$ whose stable homotopy groups are summands of the unstable homotopy groups.
Beben and Wu \cite{mrth} gave examples of such spaces and applied their results to the Moore conjecture.
They showed that for a fixed odd prime $p$ and some $p$--localization of a CW--complex of finite type $X$,
there exists a sequence $\{ l_n \}$ which converges to infinity such that $\Omega \Sigma^{l_n} X$ is a homotopy retract of $\Omega X$.
%This means that $\pi_{k+1}(\Sigma^{l_n} X)$ is a retract of $\pi_k(X)$.
%As $\{ l_n \}$ converges to infinity, it allows us to see that the stable homotopy groups of $X$ are summands of its unstable homotopy groups,
%or $\pi^s_*(X)$ is a summand of $\pi_*(X)$.
%Thus the more technical problem is to find spaces $X$ such that 
%there exist infinitely many $l_n$ with $l_n\rightarrow \infty$ such that $\Omega \Sigma^{l_n} X$ is a retract of $\Omega X$.
%In this article, we consider the case when $p=2$, and our results are given as follows.
Hence $\pi_{k+1}(\Sigma^{l_n} X)$ is a retract of $\pi_k(X)$.
Letting $\{ l_n \}$ converge to infinity,
the stable homotopy groups of $X$ are seen to be summands of its unstable homotopy groups.
Symbolically,
the group $\pi^s_*(X)$ is a summand of $\pi_*(X)$.
In this way,
Beben and Wu reduced the aforementioned problem to finding spaces $X$ together with a sequence  $\{ l_n \}$ that converges to infinity
such that $\Omega \Sigma^{l_n} X$ is a retract of $\Omega X$.
In this article,
we consider the case when $p=2$.
Our results are given as follows.
\begin{thm}\label{thm1}
For every $1 \leq i \leq n$, let $X_i$ be a path--connected 2--local CW--complex
such that $\bar{H}_*(X_i;\mathbb{Z}/2\mathbb{Z})$ is of dimension 2 with generators $u_i$, $v_i$ and $|u_i|<|v_i|$.
Then $\Omega\Sigma\wedge_{i=1}^{n} (\Sigma^{\frac{3^k-1}{2}(|u_i|+|v_i|)}{X_i})$ is a retract of $\Omega\Sigma\wedge_{i=1}^{n} {X_i}$
for every $k\geq 1$.
\end{thm}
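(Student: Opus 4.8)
The plan is to reduce the statement to its case $k=1$ by an iteration, and then to prove that case by splitting a copy of $\Sigma^{|u_i|+|v_i|}X_i$ off the third smash power $X_i^{\wedge 3}$ with the help of a nontrivial idempotent of $\mathbb{Z}/2[\Sigma_3]$, after which the James construction delivers the required loop--space retract.

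\emph{Reduction to $k=1$.} Put $Y=\wedge_{i=1}^nX_i$ and $d=\sum_{i=1}^n(|u_i|+|v_i|)$. Since smashing commutes with suspension, $\wedge_{i=1}^n(\Sigma^{c(|u_i|+|v_i|)}X_i)=\Sigma^{cd}Y$ for every $c\ge 0$, so the conclusion for exponent $k$ is exactly that $\Omega\Sigma\Sigma^{(3^k-1)d/2}Y$ is a retract of $\Omega\Sigma Y$. Assume the case $k=1$ is known for every family satisfying the hypotheses of the theorem, and assume inductively that $\Omega\Sigma\Sigma^{(3^k-1)d/2}Y$ is a retract of $\Omega\Sigma Y$. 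Apply the case $k=1$ to the (again admissible) family $(\Sigma^{(3^k-1)d/2}X_1,X_2,\dots,X_n)$; its associated invariant is $d+2\cdot\frac{3^k-1}{2}d=3^kd$, so one obtains that $\Omega\Sigma\Sigma^{3^kd}(\Sigma^{(3^k-1)d/2}Y)$ is a retract of $\Omega\Sigma\Sigma^{(3^k-1)d/2}Y$. As $3^k+\frac{3^k-1}{2}=\frac{3^{k+1}-1}{2}$, the source of this retraction is $\Omega\Sigma\Sigma^{(3^{k+1}-1)d/2}Y$, and since a retract of a retract is a retract, the induction is complete. It remains to treat $k=1$, that is, to show $\Omega\Sigma\Sigma^{d}Y$ is a retract of $\Omega\Sigma Y$.

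\emph{A copy of $\Sigma^{|u_i|+|v_i|}X_i$ in $X_i^{\wedge 3}$.} By the Bott--Samelson theorem $H_*(\Omega\Sigma Y;\mathbb{Z}/2)$ is the tensor algebra $T(V)$ on $V=\bar H_*(Y;\mathbb{Z}/2)=\bigotimes_{i=1}^n\bar H_*(X_i;\mathbb{Z}/2)$, and likewise $H_*(\Omega\Sigma\Sigma^{d}Y;\mathbb{Z}/2)=T(\Sigma^{d}V)$; the degrees occurring in $\Sigma^{d}V$ are precisely the numbers $(\sum_i|u_i|)+(\sum_i|v_i|)+(\text{a basis degree of }V)$, which point to the third stage of the James filtration and to the $\Sigma_3$--space $Y^{\wedge 3}$. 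The decisive algebraic input is that, whereas $\mathbb{Z}/2[\Sigma_2]$ is local, one has $\mathbb{Z}/2[\Sigma_3]\cong\mathbb{Z}/2[\mathbb{Z}/2]\times M_2(\mathbb{Z}/2)$, so there is a primitive idempotent $e$ in the matrix factor. Acting on $\bar H_*(X_i^{\wedge 3};\mathbb{Z}/2)=\bar H_*(X_i;\mathbb{Z}/2)^{\otimes 3}$, this $e$ annihilates the $\Sigma_3$--orbits of $u_i^{\otimes 3}$ and $v_i^{\otimes 3}$ and selects the two--dimensional simple summands of the two mixed orbits, which live in degrees $2|u_i|+|v_i|$ and $|u_i|+2|v_i|$; hence the corresponding wedge summand $X_i^{(e)}$ of $X_i^{\wedge 3}$ satisfies $\bar H_*(X_i^{(e)};\mathbb{Z}/2)\cong\bar H_*(\Sigma^{|u_i|+|v_i|}X_i;\mathbb{Z}/2)$. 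Tracing the attaching map through this natural splitting, and checking the case of a wedge of spheres, one identifies $X_i^{(e)}\simeq\Sigma^{|u_i|+|v_i|}X_i$ after $2$--localization. Since a smash of wedge summands is a wedge summand of the smash, $\Sigma^{d}Y=\wedge_{i=1}^nX_i^{(e)}$ is a wedge summand of $Y^{\wedge 3}=\wedge_{i=1}^nX_i^{\wedge 3}$.

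\emph{Producing the retract, and the main obstacle.} With $\Sigma^{d}Y$ realised as the summand of $Y^{\wedge 3}$ cut out by the idempotent $e$, I would invoke the theory of functorial decompositions of loop suspensions: the third James--Hopf map $H_3\co\Omega\Sigma Y\to\Omega\Sigma Y^{\wedge 3}$ together with the combinatorial partial section associated with $e$ exhibits $\Omega\Sigma\Sigma^{d}Y$ as a homotopy retract of $\Omega\Sigma Y$, which is the case $k=1$ and hence, with the reduction above, the theorem. The hard part is exactly this last step: constructing the partial section for $H_3$ and verifying that the composite $\Omega\Sigma\Sigma^{d}Y\to\Omega\Sigma Y\to\Omega\Sigma\Sigma^{d}Y$ is homotopic to the identity. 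This is where the nontriviality of $\mathbb{Z}/2[\Sigma_3]$ (as opposed to $\mathbb{Z}/2[\Sigma_2]$) is used, and it is what makes $p=2$ genuinely different from the odd--primary situation. A secondary difficulty is the identification $X_i^{(e)}\simeq\Sigma^{|u_i|+|v_i|}X_i$: two cells together with the $\mathbb{Z}/2$--homology do not pin down the homotopy type, so one must follow the attaching map through the naturality of the idempotent splitting; I expect this to reduce to the case of spheres, but it must be handled with care.
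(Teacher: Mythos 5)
Your overall skeleton matches the paper's (split a suspended copy of the complex off the third smash power via a Lie-type idempotent, pass to loop spaces, iterate — your reduction to $k=1$ is correct and is essentially the paper's induction), but the proof has a genuine gap exactly where you flag it, and that step is the actual content of the theorem. Knowing that $\Sigma^{d}Y$ (really $\Sigma^{1+d}Y$, since the $\mathbb{Z}_{(2)}[S_3]$-action needs a suspension coordinate, so the summand lives in $\Sigma Y^{(3)}$ rather than $Y^{\wedge 3}$) splits off the third smash power does not by itself produce a retraction of loop spaces: the James--Hopf map $H_3\co\Omega\Sigma Y\to\Omega\Sigma Y^{(3)}$ has no section at $p=2$, and there is no general ``partial section associated with $e$'' to invoke. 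The paper's route is the Selick--Wu decomposition $\Omega\Sigma Y\simeq\prod_j\Omega\tilde L_{k_j}(Y)\times A$ (2-locally, $k_j$ the odd primes), so what must be shown is that $\Sigma\wedge_i\Sigma^{|u_i|+|v_i|}X_i$ retracts off the specific functorial piece $\tilde L_3(Y)=\hocolim_{\frac13\tilde\beta_3}\Sigma Y^{(3)}$, not merely off $\Sigma Y^{(3)}$. Your construction smashes together the idempotent summands of the individual $X_i^{(3)}$, and the image of that ``product of local idempotents'' is not obviously contained in (or a retract of) the image of the global Dynkin--Specht--Wever idempotent on $Y^{(3)}$; bridging this is the paper's Proposition 3.3, whose proof is a nontrivial mod-2 homology computation: one composes the smash of local projections/inclusions with the global $\tilde\beta_3$, decomposes the result as $\gamma_1+\gamma_2+\gamma_3+\gamma_4$, and checks that the bad term contributes $2^n\equiv 0$ while the other three contribute $3\equiv 1$, so the composite is an equivalence. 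Without this (or an equivalent argument), the $k=1$ case is unproven.

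The ``secondary difficulty'' you mention is also essential rather than routine: the identification $X_i^{(e)}\simeq\Sigma^{|u_i|+|v_i|}X_i$ (equivalently $\tilde L_3(X_i)\simeq\Sigma^{1+|u_i|+|v_i|}X_i$) is proved in the paper by first producing a spherical class $\alpha\co S^{|u_i|+|v_i|}\to X_i\wedge X_i$ hitting $[u_i,v_i]$ — this uses a cofibration argument showing $i\wedge f$ is null-homotopic, where $f$ is the attaching map, and it is here that the two-cell hypothesis is really used — and then composing $\mathrm{id}\wedge\alpha$ with the idempotent and checking the induced map on mod-2 homology is an isomorphism. Your sketch (``trace the attaching map, reduce to spheres'') does not supply this, and since two cells plus mod-2 homology do not determine the homotopy type, the spherical-class construction cannot be skipped. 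So the proposal is a correct outline of the same strategy as the paper, but both load-bearing steps (the retraction off $\tilde L_3$ feeding into the Selick--Wu splitting, and the spherical class identifying the summand) are missing.
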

As a consequence of Theorem \ref{thm1},
the stable homotopy groups of certain $2$--local spaces 
%\marginnote{Point 2}\textcolor{red}{
retract off
%}
their regular homotopy groups.
\begin{coro}
\label{coro1}
For every $1 \leq i \leq n$, let $X_i$ be a path--connected 2--local CW--complex
such that $\bar{H}_*(X_i;\mathbb{Z}/2\mathbb{Z})$ is of dimension 2 with generators $u_i$, $v_i$ and $|u_i|<|v_i|$.
Let $b_k=\Sigma_{i=1}^{n}(\frac{3^k-1}{2}(|u_i|+|v_i|))$ and 
%\marginnote{Point 3}\textcolor{red}{
suppose that
%}
$\wedge_{i=1}^{n} {X_i}$ is $(m-1)$--connected.
Then for large enough $k$ such that $j\leq b_k+2m$,
the group $\pi^s_j(\Sigma\wedge_{i=1}^{n} {X_i})$ is a homotopy retract of $\pi_{j+b_k}(\Sigma\wedge_{i=1}^{n} {X_i})$.
\end{coro}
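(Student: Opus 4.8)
The plan is to collapse the iterated smash into a single iterated suspension and then invoke the Freudenthal suspension theorem recalled in the introduction. Write $X=\wedge_{i=1}^{n}X_i$ and $a_i=\frac{3^k-1}{2}(|u_i|+|v_i|)$, so that $b_k=\sum_{i=1}^{n}a_i$. Since $\Sigma^{a_i}X_i\simeq S^{a_i}\wedge X_i$ and the smash product of well--pointed spaces is commutative and associative up to homotopy, there is a homotopy equivalence $\wedge_{i=1}^{n}(\Sigma^{a_i}X_i)\simeq S^{b_k}\wedge X=\Sigma^{b_k}X$, and therefore $\Sigma\wedge_{i=1}^{n}(\Sigma^{a_i}X_i)\simeq\Sigma^{b_k+1}X$. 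Under this identification, Theorem~\ref{thm1} says precisely that $\Omega\Sigma^{b_k+1}X$ is a homotopy retract of $\Omega\Sigma X$; fix maps $\iota\co\Omega\Sigma^{b_k+1}X\to\Omega\Sigma X$ and $\rho\co\Omega\Sigma X\to\Omega\Sigma^{b_k+1}X$ with $\rho\circ\iota\simeq\mathrm{id}$.

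Next I would pass to homotopy groups. Applying $\pi_{j+b_k-1}(-)$ to $\iota$ and $\rho$ and using the loop--space adjunction $\pi_{t-1}(\Omega Y)\cong\pi_t(Y)$, one obtains group homomorphisms $\iota_*\co\pi_{j+b_k}(\Sigma^{b_k+1}X)\to\pi_{j+b_k}(\Sigma X)$ and $\rho_*\co\pi_{j+b_k}(\Sigma X)\to\pi_{j+b_k}(\Sigma^{b_k+1}X)$ with $\rho_*\circ\iota_*=\mathrm{id}$. Hence $\pi_{j+b_k}(\Sigma^{b_k+1}X)$ is a retract of the abelian group $\pi_{j+b_k}(\Sigma X)$.

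It then remains to identify the source with the stable group. Since $X=\wedge_{i=1}^{n}X_i$ is $(m-1)$--connected, the space $\Sigma^{b_k+1}X$ is $(m+b_k)$--connected, so by Freudenthal the stabilization map $\pi_{j+b_k}(\Sigma^{b_k+1}X)\to\pi^s_{j+b_k}(\Sigma^{b_k+1}X)$ is an isomorphism as soon as $j+b_k\le 2(m+b_k)$, that is, as soon as $j\le b_k+2m$. Since $b_k\to\infty$ as $k\to\infty$, this inequality holds for all sufficiently large $k$, which is exactly the standing hypothesis. Finally $\pi^s_{j+b_k}(\Sigma^{b_k+1}X)\cong\pi^s_{j-1}(X)\cong\pi^s_j(\Sigma X)$, and composing these isomorphisms with $\iota_*$ and $\rho_*$ exhibits $\pi^s_j(\Sigma\wedge_{i=1}^{n}X_i)$ as a retract of $\pi_{j+b_k}(\Sigma\wedge_{i=1}^{n}X_i)$, as claimed.

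Given Theorem~\ref{thm1}, this is essentially bookkeeping and I expect no serious obstacle; the only points requiring care are matching the numerical range in Freudenthal's theorem with the precise bound $j\le b_k+2m$ appearing in the statement, and tracking the degree shifts between $X$, $\Sigma X$ and $\Sigma^{b_k+1}X$ consistently throughout. No topological input beyond the suspension theorem already stated is needed.
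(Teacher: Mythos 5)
Your proposal is correct and follows essentially the same route as the paper: invoke Theorem \ref{thm1} to retract $\pi_{j+b_k}$ of the suspended smash off $\pi_{j+b_k}(\Sigma\wedge_{i=1}^{n}X_i)$, then use the Freudenthal suspension theorem in the range $j\leq b_k+2m$ together with the stable degree shift to identify that group with $\pi^s_j(\Sigma\wedge_{i=1}^{n}X_i)$. The only cosmetic difference is that you rewrite $\wedge_{i=1}^{n}\Sigma^{a_i}X_i$ as $\Sigma^{b_k}(\wedge_{i=1}^{n}X_i)$ explicitly, whereas the paper leaves the space in smash form and performs the same connectivity and degree bookkeeping implicitly.
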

%The special spaces in Theorem \ref{thm1} are constructed as finite wedge products of $2$--cell CW--complexes.
%For only a $2$--cell CW--complex,
%the theorem can be strengthened by using the known result \cite{MR2221079, MR1650426} on the decomposition of $\Omega\Sigma X$.
%The strengthened theorem is given by the following.
Theorem \ref{thm1} deals with the case of finite wedge products of $2$--cell CW--complexes.
In the case of a single $2$--cell CW--complex,
this theorem can be strengthened by using a known decomposition of $\Omega\Sigma X$ \cite{MR2221079, MR1650426}.
The strengthened theorem is given as follows.
\begin{thm}\label{thm2}
	Let $X$ be a simply connected $2$--local CW--complex
	such that  $\bar{H}_*(X;\mathbb{Z}/2\mathbb{Z})$ is of dimension 2 with generators $u$, $v$ and $|u|<|v|$.
	Then we have
\begin{align*}
	\Omega\Sigma X \simeq \prod_j \Omega\Sigma^{1+k_j(|u|+|v|)} X \times \text{(some other space)},
\end{align*}
where $2<k_1<k_2<...$ are all prime numbers greater than 2.
\end{thm}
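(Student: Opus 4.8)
\emph{Plan of proof.} Write $V=\bar H_*(X;\mathbb{Z}/2\mathbb{Z})=\langle u,v\rangle$; by the Bott--Samelson theorem $H_*(\Omega\Sigma X;\mathbb{Z}/2\mathbb{Z})\cong T(V)$, the free tensor algebra on $V$. The strategy is to feed $X$ into the natural decomposition of $\Omega\Sigma X$ for a simply connected $2$-local complex established in \cite{MR2221079, MR1650426}, and then to isolate, among the factors it produces, the ones of the form $\Omega\Sigma^{1+k(|u|+|v|)}X$. So the argument has three parts: recall the decomposition; locate, for each odd prime $k$, a factor whose mod-$2$ homology is a free tensor algebra on two generators in degrees $|u|+k(|u|+|v|)$ and $|v|+k(|u|+|v|)$; and identify that factor, as a space, with $\Omega\Sigma^{1+k(|u|+|v|)}X$.

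For the first two parts I would argue as follows. The cited decomposition exhibits $\Omega\Sigma X$ as a weak infinite product of indecomposable retracts that realises on mod-$2$ homology a Poincar\'e--Birkhoff--Witt-type splitting of $T(V)$ into tensor blocks indexed by a basis of the free restricted Lie algebra $L(V)$. Inside $T(V)$ one then locates, for each odd prime $k$, a free tensor subalgebra on two generators of degrees $|u|+k(|u|+|v|)$ and $|v|+k(|u|+|v|)$: it is produced from a suitable ``diagonal'' Lie element of degree $k(|u|+|v|)$ (one exists, by the bigraded Witt formula) by bracketing once with $u$ and once with $v$, and it is free by the Shirshov--Witt theorem. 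These subalgebras are sub-Hopf-algebra retracts of $T(V)$ occupying disjoint blocks of a PBW basis, which is exactly what lets the corresponding geometric retracts assemble into a genuine product factor. The reason \emph{odd primes} are the right index set is arithmetic: the count of diagonal Lie elements, together with the decomposition of $V^{\otimes k}$ under the cyclic group $C_k$ (which after $2$-localisation has invertible order), conspires to furnish exactly one such independent retract for each odd prime, whereas for $k=1$ and for composite $k$ no new retract of this shape is available and the leftover homology is carried by ``(some other space)''. A Poincar\'e-series identity, weighing $P(\Omega\Sigma^{1+k(|u|+|v|)}X)^{-1}=1-t^{|u|+k(|u|+|v|)}-t^{|v|+k(|u|+|v|)}$ against the product expansion of $1/(1-t^{|u|}-t^{|v|})$, then checks that the bookkeeping is exact; and the retractions supplied by Theorem~\ref{thm1} sit consistently among these factors and the residual term, giving an independent sanity check.

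The step I expect to be the main obstacle is the third one: recognising each indecomposable factor as $\Omega\Sigma^{1+k(|u|+|v|)}X$ \emph{on the nose} and not merely up to mod-$2$ homology, since a loop-suspension on a two-cell complex remembers the attaching map of its top cell. One must show that the bottom two cells of the factor, together with their attaching map, form a suspension of those of $X$. For this I would exploit the naturality of the decomposition of \cite{MR2221079, MR1650426} with respect to the evaluation map $X\to\Omega\Sigma X$ and to maps $\Sigma^{k(|u|+|v|)}X\to\Omega\Sigma X$ assembled from iterated Samelson products of the generators $u,v$; these compatibilities pin the relevant attaching map down to $\Sigma^{k(|u|+|v|)}$ of the attaching map of $X$, which yields the identification of the factor with $\Omega\Sigma^{1+k(|u|+|v|)}X$ and completes the proof.
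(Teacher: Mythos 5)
Your overall skeleton agrees with the paper's: both arguments feed $X$ into the Selick--Wu splitting (Proposition \ref{thm-wudecom} here), which gives $\Omega\Sigma X\simeq \prod_j\Omega\tilde L_{k_j}(X)\times A$ over odd primes, and then try to locate copies of $\Omega\Sigma^{1+k(|u|+|v|)}X$ inside the factors. But the step you yourself flag as ``the main obstacle'' is precisely where your proposal has a genuine gap, and it is the step to which the paper devotes Propositions \ref{prop1} and \ref{prop2}. All of your PBW/Shirshov--Witt/Poincar\'e-series bookkeeping takes place in $T(V)$: it can exhibit two-generator free sub-tensor-algebras of the right Poincar\'e series, but it produces no maps of spaces, and a sub-Hopf-algebra retract of homology does not by itself yield a factor (or even a retract) of the form $\Omega\Sigma^{1+k(|u|+|v|)}X$, because that homotopy type depends on the attaching map of the top cell. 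Your proposed remedy --- naturality plus maps ``assembled from iterated Samelson products of the generators $u,v$'' --- does not get off the ground: $u$ and $v$ need not be spherical classes (for $X=P^n(2)$ the top class $v$ satisfies $Sq^1u^*=v^*$, so it is not in the Hurewicz image), so there are no such Samelson products to take; the Samelson product of the suspension map $E\co X\to\Omega\Sigma X$ with itself has source $X\wedge X$, not a sphere or a suspension of $X$. The missing idea is exactly Proposition \ref{prop1}: for \emph{any} path-connected $2$-local two-cell complex, $[u,v]$ is spherical, proved by showing $i\wedge f$ is null-homotopic (the degree-$\pm1$ switching-map argument, using that $id_{S^n}\wedge f$ is a suspension) and hence $sk_{2m-1}(X\wedge X)\simeq (X\wedge S^n)\vee S^{m+n}$. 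With that spherical class $\alpha$ in hand, Proposition \ref{prop2} builds explicit maps $\phi_k$ and $\varphi_k$ out of $\alpha$ and the Dynkin--Specht--Wever idempotent telescopes, and an induction with $ad^k([u,v])(u)$ and $ad^k([u,v])(v)$ shows $\varphi_k\circ\phi_k$ is a mod-$2$ homology isomorphism; this is what gives the space-level retraction of $\Sigma^{1+k(|u|+|v|)}X$ off $\tilde L_{2k+1}(X)$, and looping and combining with Proposition \ref{thm-wudecom} finishes the proof. Without an argument of this kind your proof does not close.

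Two further points of calibration. First, you aim to identify each indecomposable factor with $\Omega\Sigma^{1+k(|u|+|v|)}X$ \emph{on the nose}; that is stronger than what is true or needed. Already for the prime $5$ the Witt formula gives $\dim\bar H_*(\tilde L_5(X);\mathbb{Z}/2\mathbb{Z})=(2^5-2)/5=6>2$, so $\Omega\Sigma^{1+2(|u|+|v|)}X$ is only a proper retract of the factor $\Omega\tilde L_5(X)$; the paper proves exactly this retraction and absorbs all complements into ``(some other space)''. Second, watch the indexing: what the paper's argument actually produces inside the factor $\Omega\tilde L_p(X)$, for $p$ an odd prime, is $\Omega\Sigma^{1+\frac{p-1}{2}(|u|+|v|)}X$ (the relevant iterated bracket $ad^k([u,v])(u)$ has tensor length $p=2k+1$), whereas your degree bookkeeping follows the displayed formula of the theorem literally and places the new generators in tensor length $2k_j+1$ with $k_j$ itself prime; any correct write-up has to settle this discrepancy when matching factors to primes.
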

%The fundamental problem in homotopy theory concerns the computation of the homotopy groups.
%As an application of Theorem \ref{thm2},
%we consider the problem on $\mathbb{Z}/8\mathbb{Z}$--summands in the homotopy groups of mod--$2$ Moore spaces.
A fundamental problem in homotopy theory is to compute the homotopy groups of a given space.
We apply Theorem \ref{thm2} to compute $\mathbb{Z}/8\mathbb{Z}$--summands of the homotopy groups of mod--$2$ Moore spaces.
Let $\mathbb{R}\mathrm{P}^2$ be the projective plane. 
The $n$--dimensional mod--$2$ Moore space $P^n(2)$ is defined by $P^n(2)=\Sigma^{n-2}\mathbb{R}\mathrm{P}^2$ for $n\geq 2$.
$P^n(2)$ can be viewed as the homotopy cofibre of the degree 2 map $[2]\co S^{n-1}\rightarrow S^{n-1}$.
%That is to say that $P^n(2)$ is the cell complex obtained by attaching an $n$--cell to $S^{n-1}$ and the attaching map is give by the degree $2$ map.
That is to say,
the cell complex $P^n(2)$ is obtained by attaching an $n$--cell to $S^{n-1}$ and the attaching map is give by the degree $2$ map.

%In \cite{MR1320988} Cohen and Wu gave the information of $\mathbb{Z}/8\mathbb{Z}$--summands in $\pi_*(P^n(2))$,
%where the first noted $\mathbb{Z}/8\mathbb{Z}$--summand of $\pi_*(P^{4n+1}(2))$ can be found in $\pi_{120n-14}(P^{4n+1}(2))$,
%also they asked that whether $\pi_*(P^{4n+1}(2))$ has $\mathbb{Z}/8\mathbb{Z}$--summands in lower degrees.
%We will provide a positive answer to the question by the following corollary as a consequence of Theorem \ref{thm2}.

This problem was studied earlier by  Cohen and Wu \cite{MR1320988}.
They noted that a $\mathbb{Z}/8\mathbb{Z}$--summands of $\pi_*(P^{4n+1}(2))$ can be found in $\pi_{120n-14}(P^{4n+1}(2))$.
They also asked whether $\pi_*(P^{4n+1}(2))$ has $\mathbb{Z}/8\mathbb{Z}$--summands in lower degrees.
The following corollary of Theorem \ref{thm2} answers Cohen and Wu's question in the affirmative.

\begin{coro}\label{coro2}
There exists homotopy equivalences for every $n\geq 1$.
\begin{enumerate}
	\item $\Omega P^{4n}(2)\simeq \Omega P^{(8k+4)n-3k}(2)\times \text{(some other space)}\\$
	Thus $\pi_{(16k+8)n-6k-2}(P^{4n}(2))$ contains a $\mathbb{Z}/8\mathbb{Z}$--summand, for all $k\in\mathbb{Z}^{\geq 0}$ such that $k\equiv 2 \pmod 4$.
	\item $\Omega P^{4n+1}(2)\simeq \Omega P^{(8k+4)n+1-k}(2)\times \text{(some other space)}\\$
	Thus $\pi_{(16k+8)n-2k}(P^{4n+1}(2))$ contains a $\mathbb{Z}/8\mathbb{Z}$--summand, for all $k\in\mathbb{Z}^{\geq 0}$ such that $k\equiv 3 \pmod 4$.
	\item $\Omega P^{4n+2}(2)\simeq \Omega P^{(8k+4)n+2+k}(2)\times \text{(some other space)}\\$
	Thus $\pi_{(16k+8)n+2k+2}(P^{4n+3}(2))$ contains a $\mathbb{Z}/8\mathbb{Z}$--summand, for all $k\in\mathbb{Z}^{\geq 0}$ such that $k\equiv 0 \pmod 4$.
	\item $\Omega P^{4n+3}(2)\simeq \Omega P^{(8k+4)n+3+3k}(2)\times \text{(some other space)}\\$
	Thus $\pi_{(16k+8)n+6k+4}(P^{4n+3}(2))$ contains a $\mathbb{Z}/8\mathbb{Z}$--summand, for all $k\in\mathbb{Z}^{\geq 0}$ such that $k\equiv 1 \pmod 4$.
\end{enumerate}
\end{coro}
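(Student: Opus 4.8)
The plan is to run mod-$2$ Moore spaces through Theorem~\ref{thm2} and then transport one explicit $\mathbb{Z}/8$-summand across the resulting retraction.

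\emph{Reduction via Theorem~\ref{thm2}.} For $m\geq 4$ the space $P^{m-1}(2)$ is simply connected and $2$-local, and $\bar H_*(P^{m-1}(2);\mathbb{Z}/2)$ is two-dimensional with generators $u,v$ in degrees $m-2$ and $m-1$, so $|u|+|v|=2m-3$. Since $P^m(2)=\Sigma P^{m-1}(2)$, Theorem~\ref{thm2} yields
\[
\Omega P^m(2)\ \simeq\ \prod_{j}\Omega\,\Sigma^{1+k_j(2m-3)}P^{m-1}(2)\times(\text{some other space})\ =\ \prod_{j}\Omega P^{\,m+k_j(2m-3)}(2)\times(\cdots),
\]
with the $k_j$ the primes greater than $2$. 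Feeding each factor $P^{m+k_j(2m-3)}(2)=\Sigma P^{m-1+k_j(2m-3)}(2)$ — still simply connected, with homology generators now of degree-sum $(1+2k_j)(2m-3)$ — back into Theorem~\ref{thm2} and iterating, one gets, for a suitable infinite family of positive integers $c$, a retraction of $\Omega P^{\,m+c(2m-3)}(2)$ off $\Omega P^m(2)$. Passing to homotopy groups (and using $\pi_i(\Omega Z)=\pi_{i+1}(Z)$ for the suspensions $Z$ in sight) shows $\pi_i\!\big(P^{\,m+c(2m-3)}(2)\big)$ is a direct summand of $\pi_i\!\big(P^m(2)\big)$ for all $i$; the equivalences in (1)--(4) are the special cases $m=4n,4n+1,4n+2,4n+3$.

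\emph{The seed summand.} It remains to exhibit one $\mathbb{Z}/8$ inside the homotopy of a mod-$2$ Moore space. Here we use that, for $N\equiv 2\pmod 4$, the group $\pi_{2N-2}(P^N(2))$ contains a $\mathbb{Z}/8$-summand: this is the metastable $\mathbb{Z}/8$ coming from the order-$8$ element of $\pi_3^s$ together with the Whitehead products of the cell classes of $P^N(2)$, and it is exactly the phenomenon behind the observation $\mathbb{Z}/8\subset\pi_{120n-14}(P^{4n+1}(2))$ of Cohen and Wu~\cite{MR1320988}, which corresponds to $N=60n-6$, $2N-2=120n-14$.

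\emph{Matching the exponents, and the obstacle.} For each residue of $m$ mod $4$ we pick the shift $c$ so that $N:=m+c(2m-3)\equiv 2\pmod 4$; a single prime $c=k$ suffices when $m\equiv 1,3\pmod 4$ (forcing $k\equiv 3$, resp. $k\equiv 1$, mod $4$), whereas $c$ must be even, hence a product of two odd primes, when $m\equiv 0,2\pmod 4$ (forcing $k\equiv 2$, resp. $k\equiv 0$, mod $4$). Substituting $m=4n,4n+1,4n+2,4n+3$ and writing $c=k$ turns $N=m+c(2m-3)$ into $(8k+4)n-3k$, $(8k+4)n+1-k$, $(8k+4)n+2+k$, $(8k+4)n+3+3k$ respectively — the Moore-space indices in (1)--(4) — and $2N-2$ into $(16k+8)n-6k-2$, $(16k+8)n-2k$, $(16k+8)n+2k+2$, $(16k+8)n+6k+4$; under the stated congruence $N\equiv 2\pmod 4$, so the seed summand applies to $P^N(2)$ and its $\mathbb{Z}/8$ in degree $2N-2$ pulls back, by the reduction above, to the asserted $\mathbb{Z}/8$-summand of $\pi_{2N-2}(P^m(2))$. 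The one substantive point — the main obstacle — is the seed statement itself: showing that $\pi_{2N-2}(P^N(2))$ genuinely contains a $\mathbb{Z}/8$ (and not merely a $\mathbb{Z}/4$ or $(\mathbb{Z}/2)^{\oplus 2}$) for all $N\equiv 2\pmod 4$; everything else is an application of Theorem~\ref{thm2} and elementary arithmetic with the exponents.
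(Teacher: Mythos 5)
Your overall strategy is the same as the paper's (apply Theorem~\ref{thm2} to $P^m(2)=\Sigma P^{m-1}(2)$, then transport a known $\mathbb{Z}/8\mathbb{Z}$--summand across the retraction), but there is a genuine gap in the step where you claim the required shifts $c=k$ are realized. The corollary asserts the splitting and the $\mathbb{Z}/8\mathbb{Z}$--summand for \emph{every} $k\geq 0$ in the stated residue class mod $4$, not just for prime $k$. A single application of Theorem~\ref{thm2}, as you read it, only produces shifts $c$ equal to odd primes, so it already misses, e.g., $k=9,15,21$ in items (2) and (4); and your treatment of the even case is not correct: ``$c$ must be even, hence a product of two odd primes'' is self-contradictory (a product of two odd primes is odd), and in any case iterated retractions do not compose by multiplying the $c$'s. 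If $\Omega P^{M}(2)$ retracts off $\Omega P^{m}(2)$ with $2M-3=(2m-3)(2c_1+1)$ and one applies the construction again with parameter $c_2$, the effective shift satisfies $2c+1=(2c_1+1)(2c_2+1)$, i.e.\ $c=c_1+c_2+2c_1c_2$; with your prime-indexed reading the achievable even values form a sparse set (the smallest is $24$) that contains none of $k=2,6,10,\dots$ needed in item (1) nor $k=4,8,\dots$ in item (3). The paper's proof avoids this: Proposition~\ref{prop2} gives $\Sigma^{1+k(|u|+|v|)}X$ as a retract of $\tilde L_{2k+1}(X)$ for \emph{every} $k\geq 1$ (every odd $2k+1$, not only primes), Proposition~\ref{thm-wudecom} splits $\Omega\tilde L_{p}(X)$ off $\Omega\Sigma X$ for every odd prime $p$, and since the shifts multiply through $2k+1$ under iteration and every odd integer $\geq 3$ is a product of odd primes, one obtains that $\Omega P^{\,m+k(2m-3)}(2)$ retracts off $\Omega P^{m}(2)$ for all $k\geq 1$. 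You need this (or an equivalent argument) before your congruence bookkeeping, which is otherwise correct, can yield ``for all $k$'' in each residue class.

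Secondly, what you flag as ``the main obstacle'' is not one: the seed statement is exactly the cited theorem of Cohen and Wu \cite{MR1320988}, that for $n\geq 4$ odd the group $\pi_{4n-2}(P^{2n}(2))$ contains a $\mathbb{Z}/8\mathbb{Z}$--summand, i.e.\ $\mathbb{Z}/8\mathbb{Z}\subset\pi_{2N-2}(P^{N}(2))$ for $N\equiv 2\pmod 4$ (with $N$ not too small). The paper simply quotes this result; your heuristic description via the order--$8$ element of $\pi_3^s$ and Whitehead products is neither needed nor a proof, and leaving the seed unestablished while the realization-of-shifts step above is the actual missing argument misallocates where the work lies.
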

In particular,
there is a $\mathbb{Z}/8\mathbb{Z}$--summand in $\pi_{56n-6}(P^{4n+1}(2))$.
This is of a degree lower than that given in \cite{MR1320988}.

This article is organized as follows.
In Section 2,
we introduce some notations and basic properties.
The proofs of Theorem \ref{thm1} and \ref{thm2} are given in Section 3.
The proofs of Corollary of \ref{coro1} and \ref{coro2} and some remarks are given in Section 4.
\begin{ack}
The 2nd author gratefully acknowledges the assistance of Singapore Ministry of Education research grants AcRF Tier 2 (WBS No.R-146-000-143-112)
and a grant (No. 11028104) of NSFC of China.
We wish to thank the referee most warmly for numerous suggestions that have improved the exposition of this paper.
%Thank goes to Weiyu Tan to proofread the English of article.
The authors are greatful to Weiyu Tan for proofreading this paper.
\end{ack}
\section{Preliminary}
Let $X$ be a path--connected, $2$--local CW--complex of finite type
and
%\marginnote{Point 9}\textcolor{red}{
let $X^{(n)}$ be the $n$--fold self smash product of the space $X$.
%}
%The symmetric group on $k$ letters is denoted by $S_k$,
%and $\mathbb{Z}_{(2)}(S_n)$ is the group ring over the $2$ local integer $\mathbb{Z}_{(2)}$ generated by $S_k$.
Let $S_k$ denote the symmetric group on $k$ letters and let $\mathbb{Z}_{(2)}(S_n)$ denote the group ring
over the $2$ local integer $\mathbb{Z}_{(2)}$ generated by $S_k$.

Consider the action of $\mathbb{Z}_{(2)}(S_n)$ on $\Sigma X^{(n)}$ by permuting coordinates and taking the summations.
%as in the group structure of $[\Sigma X^{(n)},\Sigma X^{(n)}]$.
For any $\delta\in\mathbb{Z}_{(2)}(S_n)$ we obtain a map
\begin{align*}
\tilde\delta \co \Sigma X^{(n)} \rightarrow \Sigma X^{(n)}.
\end{align*}
Let $V=\bar H_* (X;\mathbb{Z}/2\mathbb{Z})$,
which is a graded $\mathbb{Z}/2\mathbb{Z}$--module.
We use 
%\marginnote{Point 4}\textcolor{red}{
\begin{align*}
\Sigma V^{\otimes n}= \bar H^*( S^1 ; \mathbb{Z}/2\mathbb{Z} )  \otimes V^{\otimes n}
\end{align*}
%}
to denote the $\mathbb{Z}/2\mathbb{Z}$ reduced homology of $\Sigma X^{(n)}$.
Therefore $\tilde\delta$ induces a map
\begin{align*}
\tilde\delta_* \co \Sigma V^{\otimes n} \longrightarrow \Sigma V^{\otimes n}
\end{align*}
by permuting factors.

Define the \textit{Dynkin--Specht--Wever} elements inductively.
Start with $\beta_2=1-(1,2)\in \mathbb{Z}_{(2)}(S_2)$.
Then 
\begin{align*}
	\beta_n = \beta_{n-1}\wedge id - (1,2,...,n) \circ (\beta_{n-1}\wedge id).
\end{align*}
The element $\beta_n$ induces a map
\begin{align*}
\tilde\beta_n \co \Sigma X^{(n)} \rightarrow \Sigma X^{(n)}.
\end{align*}
Let $\iota_1$ denote the generator of the mod--$2$ reduced homology of $S^1$,
then
\begin{align*}
	\tilde\beta_{n*}(\iota_1 \otimes x_1 \otimes \ldots \otimes x_n)=\iota_1 \otimes [[\ldots[[x_1,x_2],\ldots x_{n-1}],x_n],
\end{align*}
where $[[\ldots[[x_1,x_2],\ldots x_{n-1}],x_n]\in (\mathbb{Z}/2\mathbb{Z})^{\otimes n}$ denotes the commutators.
$\tilde\beta_{n*}\circ\tilde\beta_{n*}=n\tilde\beta_{n*}$ following \cite{MR1320988}.
Hence if $n$ is an odd integer,
then $ \frac{1}{n}\tilde\beta_{n*} \circ \frac{1}{n}\tilde\beta_{n*} = \frac{1}{n}\tilde\beta_{n*}$.

%We define the mapping telescope of the following a sequence of maps
Denote by $\hocolim_{f} \Sigma X^{(n)}$ the mapping telescope of the following a sequence of maps:
\begin{align*}
	\Sigma X^{(n)} \stackrel{f}{\longrightarrow} \Sigma X^{(n)} \stackrel{f}{\longrightarrow} \cdots
\end{align*}
%by $hocolim_{f} \Sigma X^{(n)}$.
For an odd integer $n$,
the elements
$\frac{1}{n}\tilde\beta_{n*}$ and $(id_{\Sigma X^{(n)}}-\frac{1}{n}\tilde\beta_{n})_*$ are orthogonal idempotents.
%\marginnote{Point 5}\textcolor{red}{
Since $\frac{1}{n}\tilde\beta_{n*} \circ (id_{\Sigma X^{(n)}}-\frac{1}{n}\tilde\beta_{n})_*$ and
$(id_{\Sigma X^{(n)}}-\frac{1}{n}\tilde\beta_{n})_* \circ \frac{1}{n}\tilde\beta_{n*}$ are trivial,
the following the composite is a homotopy equivalence:
\begin{align*}
\Sigma X^{(n)} \xrightarrow{comult} \Sigma X^{(n)} \vee \Sigma X^{(n)} 
\rightarrow \hocolim_{\frac{1}{n} \tilde\beta_n} \Sigma X^{(n)} 
\vee \hocolim_{id_{\Sigma X^{(n)}}-\frac{1}{n}\tilde\beta_{n}} \Sigma X^{(n)}.
\end{align*}
%is a homotopy equivalence because its induced map on the homology with coefficients in $2$--local integers is a isomorphism.
This is because the induced map on the homology with coefficients in $2$--local integers is an isomorphism.
%}
Hence $\hocolim_{\frac{1}{n} \tilde\beta_n} \Sigma X^{(n)}$ retracts off $\Sigma X^{(n)}$.
Let $\tilde L_n(X) = \hocolim_{\frac{1}{n} \tilde\beta_n} \Sigma X^{(n)}$.
Let $p \co \Sigma X^{(n)} \rightarrow \tilde L_n(X)$ be the projection
and let $i \co \tilde L_n(X) \rightarrow \Sigma X^{(n)}$ be the canonical inclusion,
then $\frac{1}{n}\tilde\beta_{n*}$ is identical to the composition:
\begin{align}
	H_*(\Sigma X^{(n)};\mathbb{Z}/2\mathbb{Z}) \stackrel{p_*}{\rightarrow} H_*(\tilde L_n(X);\mathbb{Z}/2\mathbb{Z}) \stackrel{i_*}{\rightarrow} H_*(\Sigma X^{(n)};\mathbb{Z}/2\mathbb{Z}). \label{Eq:pi}
\end{align}
%\marginnote{Point 5}\textcolor{red}{
In the special case when $X$ is a suspension,
as in \cite{MR1650426} we let $L_n(X)=\hocolim_{\frac{1}{n} \beta_n} X^{(n)}$.
In this case $\tilde L_n(X) \simeq \Sigma L_n(X)$.
%}

It is well known that the mod--$2$ reduced homology of $\mathbb{R}\mathrm{P}^2\wedge \mathbb{R}\mathrm{P}^2$ contains a spherical class of degree $3$.
We generalize this fact to any path--connected $2$--cell CW--complex.
%As a further consequence,
%any self smash product of $2$--cell CW--complexes contains a spherical class different from its bottom cell.
\begin{prop}
\label{prop1}
Let $X$ be a $2$--local $2$--cell path--connected CW--complex
such that $\bar{H}_*(X;\mathbb{Z}/2\mathbb{Z})$ is of dimension 2 with generators $u$ and $v$ such that $|v|=m$ and $|u|=n$.
Then there exists a map
\begin{align*}
		\alpha \co S^{m+n}\rightarrow X\wedge X
\end{align*}
such that the image in $\alpha_*$ of induced map on mod--$2$ homology
is generated by $[u,v]\in H_*(X\wedge X;\mathbb{Z}/2\mathbb{Z})$.
\end{prop}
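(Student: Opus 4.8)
The plan is to build $\alpha$ directly from the cell structure of $X\wedge X$, exploiting that the two smashed copies of the bottom cell of $X$ overlap in a single sphere. After interchanging $u$ and $v$ if necessary (which does not affect $[u,v]=u\otimes v+v\otimes u$ modulo $2$) assume $n=|u|<|v|=m$; if $|u|=|v|$ then $X\simeq S^n\vee S^n$, $X\wedge X$ is a wedge of spheres, and the claim is clear. Write $X=S^n\cup_\phi e^m$ and let $i\co S^n\hookrightarrow X$ be the inclusion of the bottom cell, a representative of $u$. The cells of $X\wedge X$ of dimension $\le m+n$ are $e^n\wedge e^n$ (dimension $2n$) and the two cells $e^n\wedge e^m$, $e^m\wedge e^n$ (dimension $m+n$), so the $(m+n)$--skeleton is
\[
W:=(X\wedge X)^{(m+n)}=A_1\cup_{S^{2n}}A_2,\qquad A_1=S^n\wedge X\xrightarrow{i\wedge 1}X\wedge X,\quad A_2=X\wedge S^n\xrightarrow{1\wedge i}X\wedge X,
\]
a homotopy pushout with $A_1\cap A_2=S^n\wedge S^n=S^{2n}$ and $A_1\simeq A_2\simeq\Sigma^n X$. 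In $\bar H_*(X\wedge X;\mathbb Z/2)=\bar H_*(X;\mathbb Z/2)^{\otimes 2}$ the top cells of $A_1$ and $A_2$ carry $u\otimes v$ and $v\otimes u$, and $[u,v]=u\otimes v+v\otimes u$.

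First I would write down two maps $s_1,s_2\co\Sigma^n X\to W$ that restrict to the same class on the bottom sphere. Let $s_1$ be the inclusion $A_1\hookrightarrow W$ (under $\Sigma^n X\cong A_1$), and let $s_2$ be the composite of $A_2\hookrightarrow W$ with a homotopy equivalence $g\co\Sigma^n X\xrightarrow{\ \simeq\ }A_2$ chosen so that $g|_{S^{2n}}$ is homotopic to the inclusion $S^{2n}\hookrightarrow A_2$. Such a $g$ exists: take the coordinate twist $S^n\wedge X\to X\wedge S^n$ and, when $n$ is odd, precompose with a degree $-1$ self--equivalence of the suspension $\Sigma^n X$ to absorb the degree $(-1)^n$ that the twist has on $S^n\wedge S^n$. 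Then $s_1|_{S^{2n}}\simeq s_2|_{S^{2n}}\simeq(S^{2n}\hookrightarrow W)$ in $\pi_{2n}(W)$, while on $\bar H_{m+n}(\,\cdot\,;\mathbb Z/2)$ one has $s_{1*}\iota=u\otimes v$ and $s_{2*}\iota=v\otimes u$, where $\iota$ generates $\bar H_{m+n}(\Sigma^n X;\mathbb Z/2)$.

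Then I would form $\sigma:=s_1-s_2\in[\Sigma^n X,W]$, which is legitimate since $\Sigma^n X$ is a co--$H$--space. By the choice of $s_1,s_2$ the restriction $\sigma|_{S^{2n}}$ is nullhomotopic, so by the cofibre sequence $S^{2n}\hookrightarrow\Sigma^n X\to\Sigma^n X/S^{2n}$ the map $\sigma$ compresses, up to homotopy, through $\Sigma^n X/S^{2n}=S^n\wedge(X/S^n)=S^{m+n}$: say $\sigma\simeq\bar\sigma\circ p$ with $p\co\Sigma^n X\to S^{m+n}$ the pinch map and $\bar\sigma\co S^{m+n}\to W$. Since $p_*$ is an isomorphism on $\bar H_{m+n}(\,\cdot\,;\mathbb Z/2)$ and $\sigma_*\iota=s_{1*}\iota-s_{2*}\iota=u\otimes v+v\otimes u$, the map $\bar\sigma$ sends the fundamental class of $S^{m+n}$ to $u\otimes v+v\otimes u$ in $\bar H_{m+n}(W;\mathbb Z/2)$. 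Finally set $\alpha:=(W\hookrightarrow X\wedge X)\circ\bar\sigma$; the inclusion $W\hookrightarrow X\wedge X$ sends the classes of the two top cells of $W$ to $u\otimes v$ and $v\otimes u$, so $\alpha_*$ has image generated by $[u,v]=u\otimes v+v\otimes u$, as required.

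Nearly everything here is routine: the cell count for $X\wedge X$, the description of its $(m+n)$--skeleton as a homotopy pushout, the compression through the cofibre, and the K\"unneth bookkeeping. The one place that genuinely needs care — and the only point at which a careless argument breaks — is arranging $s_1$ and $s_2$ to restrict to the \emph{same} homotopy class on $S^{2n}$: because the coordinate twist of $S^n\wedge S^n$ has degree $(-1)^n$, without the degree $-1$ correction when $n$ is odd one finds $(s_1-s_2)|_{S^{2n}}\simeq 2\cdot(S^{2n}\hookrightarrow W)$ rather than a nullhomotopic map, and then $\sigma$ does not compress into $S^{m+n}$.
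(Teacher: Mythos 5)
Your proof is correct, but it takes a genuinely different route from the paper's. The paper starts from the cofibration $X\wedge S^{m-1}\to X\wedge S^{n}\to X\wedge X$ induced by the attaching map $f$ of the top cell, shows that the restricted attaching map $i\wedge f\co S^{n}\wedge S^{m-1}\to X\wedge S^{n}$ is null--homotopic (the sign of the switch on $S^{n}\wedge S^{n}$, which you neutralize with a degree $-1$ correction, is handled there by a two--case analysis on $\deg\tau=\pm 1$), deduces the splitting $sk_{2m-1}(X\wedge X)\simeq (X\wedge S^{n})\vee S^{m+n}$, defines $\alpha$ as the wedge--summand inclusion, and finally identifies $\alpha_*(\iota_{m+n})$ with $[u,v]$ on the grounds that $[u,v]$ is the only primitive class in that degree. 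You never split the skeleton: you form the difference $s_1-s_2$ in the group $[\Sigma^{n}X,\,(X\wedge X)^{(m+n)}]$ of the two maps coming from $S^{n}\wedge X$ and $X\wedge S^{n}$, arrange it to vanish on the bottom sphere, and compress it through the pinch map to $S^{m+n}$. What your route buys is that the homology value $u\otimes v+v\otimes u$ falls out of the construction itself, with no appeal to a primitivity argument to exclude the class being $u\otimes v$ alone; what the paper's route buys is the stronger structural statement that the $(m+n)$--skeleton splits off a sphere, which is more than the proposition needs. Two points you leave implicit are standard but deserve a line in a final write--up: the restriction map $[\Sigma^{n}X,W]\to[S^{2n},W]$ is a group homomorphism because the bottom--cell inclusion $S^{2n}\to\Sigma^{n}X$ is itself a suspension (hence a co--$H$ map), which is what converts $s_1|_{S^{2n}}\simeq s_2|_{S^{2n}}$ into $(s_1-s_2)|_{S^{2n}}\simeq *$; and the identity $\sigma_*=s_{1*}-s_{2*}$ on mod--$2$ homology likewise uses additivity of induced maps over the co--$H$ structure.
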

\begin{proof}
When $m=n$, $X$ is just a wedge product of two spheres
and the statement is trivial.
Assume without loss of generality that $m>n$.
Let $f \co S^{m-1}\rightarrow S^n$ be the attaching map of the $m$--cell to the $n$--cell of $X$.
Consider the homotopy cofibration
\begin{align*}
	X\wedge S^{m-1} \stackrel{id_X \wedge f}{\longrightarrow} X\wedge S^{n} \rightarrow X\wedge X.
\end{align*}
Take the
%\marginnote{Point 6}\textcolor{red}{
$(2m-1)$--skeleton
%}
of $X\wedge X$ and note that $S^n\wedge S^{m-1}\simeq sk_{2m-2}(X\wedge S^{m-1})$,
hence we obtain a homotopy cofibration
\begin{align*}
	S^n\wedge S^{m-1} \stackrel{i \wedge f}{\longrightarrow} X\wedge S^{n} \rightarrow sk_{2m-1}(X\wedge X).
\end{align*}
Next we will show that $i \wedge f$ is null--homotopic.
Since $i \wedge f$ is homotopic to the composite
\begin{align*}
	S^n \wedge S^{m-1} \stackrel{id_{S^n}\wedge f}{\longrightarrow} S^n\wedge S^n
	\stackrel{i\wedge id_{S^n}}{\longrightarrow} X\wedge S^n,
\end{align*}
we obtain the homotopy commutative diagram
\begin{displaymath}
\begin{xy}
(0,20)*+{S^n\wedge S^{m-1}}="01";(40,20)*+{S^n \wedge S^n}="11";
(0,0)*+{S^{m-1}\wedge S^n}="00";(40,0)*+{S^n \wedge S^n}="10";(80,0)*+{X\wedge S^n}="20";
{\ar^{id_{S^n}\wedge f} "01";"11"};
{\ar^{\tau'} "01";"00"};{\ar^{\tau} "11";"10"};
{\ar^{f\wedge id_{S^n}} "00";"10"};{\ar^{i\wedge id_{S^n}} "10";"20"};
\end{xy}
\end{displaymath}
where the bottom row is homotopy cofibration and $\tau'$ and $\tau$ are switching maps.

There two cases:
the map $S^n\wedge S^n \stackrel{\tau}{\rightarrow} S^n\wedge S^n$ has degree either 1 or -1.
If $\deg(\tau)=1$, then $\tau\simeq id_{S^{2n}}$ and
\begin{align*}
	i\wedge f  	&\simeq (i\wedge id_{S^n}) \circ (id_{S^n}\wedge f) \\
				&\simeq (i\wedge id_{S^n}) \circ \tau \circ (id_{S^n}\wedge f) \\
				&\simeq (i\wedge id_{S^n}) \circ (f\wedge id_{S^n}) \circ \tau' \\
				&\simeq *.
\end{align*}
If $\deg(\tau)=-1$, since $id_{S^n}\wedge f$ is a suspension, we have
\begin{align*}
	(id_{S^n}\wedge f) \circ [-1] \simeq \tau \circ (id_{S^n}\wedge f).
\end{align*}
Thus we obtain that
\begin{align*}
	id_{S^n}\wedge f 	&\simeq (id_{S^n}\wedge f) \circ [-1] \circ [-1] \\
									 	&\simeq \tau \circ (id_{S^n}\wedge f) \circ [-1] \\
									 	&\simeq (f\wedge id_{S^n}) \circ \tau' \circ [-1].
\end{align*}
It follows that
\begin{align*}
	i\wedge f	&\simeq (i\wedge id_{S^n}) \circ (id_{S^n}\wedge f) \\
						&\simeq (i\wedge id_{S^n}) \circ (f\wedge id_{S^n}) \circ \tau' \circ [-1] \\
						&\simeq *.
\end{align*}
Thus in either case,
$i\wedge f$ is null--homotopic. 
%Similarly $f\wedge i$ is also null--homotopic.
Therefore
 %\marginnote{Point 7}\textcolor{red}{
\begin{align*}
	sk_{2m-1}(X\wedge X) \simeq
	(X\wedge S^{n}) \vee S^{m+n}.
\end{align*}
%}
Hence we can define $\alpha$ by the following composite of canonical inclusions:
\begin{align*}
	S^{m+n}\hookrightarrow sk_{2m-1}(X\wedge X) \hookrightarrow X\wedge X.
\end{align*}
%\marginnote{Point 8}\textcolor{red}{
Because $[u,v]$ is the only primitive generator of $H_{m+n}(X\wedge X;\mathbb{Z}/2\mathbb{Z})$,
one gets $\alpha_*(\iota_{m+n})=[u,v]\in H_*(X\wedge X;\mathbb{Z}/2\mathbb{Z})$.
%}
\end{proof}
\section{Decomposition of Loop Spaces and Proofs of Theorem \ref{thm1} and \ref{thm2}}
%Let $X^{(n)}$ denote the $n$--fold self smash product of a apace $X$.
Recall that for an odd integer $n$, the space $\tilde L_n(X)$ is a homotopy retract of $\Sigma X^{(n)}$.
By studying the $2$--local decomposition of $\Sigma X^{(n)}$,
one can investigate the spaces $\tilde L_n(X)$.
In \cite{MR2002820}, the finest $2$--primary splitting of $X^{(n)}$ is given.

Suppose that $\bar{H}_*(X;\mathbb{Z}/2\mathbb{Z})$ is generated by two elements $u$ and $v$ with $|u|<|v|$.
Proposition \ref{prop1} implies that,
if $n=2k+1$ for some non-negative integer $k$,
then there exists a canonical inclusion $\Sigma^{1+k(|u|+|v|)}X \hookrightarrow \Sigma X^{(n)}$.
%The following proposition shows that the composite
%$\Sigma^{1+k(|u|+|v|)} X \rightarrow \Sigma X^{(n)} \rightarrow \tilde L_{n}(X)$ has a left homotopy inverse.
\begin{prop}
\label{prop2}
Let $X$ be a path--connected $2$--local $2$--cell CW--complex such that $\bar{H}_*(X;\mathbb{Z}/2\mathbb{Z})$ is of dimension 2 with generators $u$ and $v$ such that $|u|<|v|$.
For every odd integer $n \geq 3$,
let $p_n$ be the natural projection defined in Equation \eqref{Eq:pi},
%\marginnote{Point 11}\textcolor{red}{
then the following composite has a left homotopy inverse:
\begin{align*}
	\Sigma^{1+k(|u|+|v|)} X \rightarrow \Sigma X^{(n)} \xrightarrow{p_n} \tilde L_{n}(X).
\end{align*}
%}
\end{prop}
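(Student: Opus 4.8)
The plan is to reduce the claim to a nonvanishing computation in mod--$2$ homology and then upgrade it, using that $Y:=\Sigma^{1+k(|u|+|v|)}X$ is recognised up to homotopy by $\bar H_*(-;\mathbb{Z}/2\mathbb{Z})$. Write $a=|u|$, $b=|v|$, $n=2k+1$ and $q=1+k(a+b)$, so that $Y=\Sigma^q X$ has reduced mod--$2$ homology in degrees $q+a$ and $q+b$, the two classes being joined by the attaching map $\Sigma^q f$ of $Y$, where $f$ is the attaching map of $X$. Since $q\ge 4$, $Y$ is simply connected, $2$--local and of finite type, so a self--map of $Y$ that is an isomorphism on $\bar H_*(-;\mathbb{Z}/2\mathbb{Z})$ is a homotopy equivalence. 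It therefore suffices to construct a map $r\co\tilde L_n(X)\to Y$ for which $r\circ g$ is an isomorphism on $\bar H_*(-;\mathbb{Z}/2\mathbb{Z})$, $g$ denoting the composite of the statement; then $r\circ g$ is a homotopy equivalence and $(r\circ g)^{-1}\circ r$ is the desired left homotopy inverse. So one must (i) prove $g_*$ is injective on mod--$2$ homology and (ii) realise a splitting of $g_*$ by an honest map.

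For the homology: by Equation \eqref{Eq:pi}, $i_*\circ p_{n*}=\frac1n\tilde\beta_{n*}$, an idempotent on $\Sigma V^{\otimes n}=\bar H_*(\Sigma X^{(n)};\mathbb{Z}/2\mathbb{Z})$ (recall $n=2k+1$ is odd) whose image is $\iota_1\otimes\mathrm{Lie}_n(V)$, the degree--$n$ part of the free graded Lie algebra on $V=\langle u,v\rangle$. The two cells of $Y$ lie in the coprime bidegrees $(k+1,k)$ and $(k,k+1)$ of $V^{\otimes n}$, since $k(a+b)+a=(k+1)a+kb$ and $k(a+b)+b=ka+(k+1)b$; here Witt's formula gives $\dim_{\mathbb{Z}/2\mathbb{Z}}\mathrm{Lie}_n(V)_{(k+1,k)}=\dim_{\mathbb{Z}/2\mathbb{Z}}\mathrm{Lie}_n(V)_{(k,k+1)}=\frac1n\binom nk>0$. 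The canonical inclusion $j\co Y\to\Sigma X^{(n)}$ is obtained by smashing an extra copy of $X$, placed first, with $k$ copies of the map $\alpha$ of Proposition \ref{prop1} and suspending; so $j_*$ sends the two generators of $\bar H_*(Y)$ to $\xi_a=\iota_1\otimes u\otimes[u,v]^{\otimes k}$ and $\xi_b=\iota_1\otimes v\otimes[u,v]^{\otimes k}$, and $i_*\circ g_*$ sends them to $\frac1n\tilde\beta_{n*}(\xi_a)$ and $\frac1n\tilde\beta_{n*}(\xi_b)$.

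For step (i): unwinding the formula for $\tilde\beta_{n*}$ and using the Jacobi identity repeatedly in the mod--$2$ free Lie algebra on $V$ shows that $\tilde\beta_{n*}(\iota_1\otimes x\otimes\ell_2\otimes\cdots\otimes\ell_m)=\iota_1\otimes[[\cdots[[x,\ell_2],\ell_3],\cdots],\ell_m]$ for $x\in V$ and Lie elements $\ell_2,\dots,\ell_m$ — the left--normed bracket of the letters of each $\ell_j$ appended to an accumulated bracket equals, by iterated Jacobi, the single bracket with $\ell_j$. Hence $\frac1n\tilde\beta_{n*}(\xi_a)=\iota_1\otimes[u,[u,v],\dots,[u,v]]$ and $\frac1n\tilde\beta_{n*}(\xi_b)=\iota_1\otimes[v,[u,v],\dots,[u,v]]$ (left--normed, $k$ copies of $[u,v]$). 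These are nonzero: $\{u,[u,v]\}$ and $\{v,[u,v]\}$ each generate a \emph{free} Lie subalgebra of the free Lie algebra on $u,v$ (Lazard elimination/Shirshov--Witt), and in a free Lie algebra $[\ell,w]\ne0$ whenever $\ell$ is not a scalar multiple of $w$, so these iterated brackets do not vanish. Lying in distinct bidegrees, $g_*$ is thus injective, and over the field $\mathbb{Z}/2\mathbb{Z}$ automatically split. The hypothesis $|u|<|v|$ is used here to pin down which cells of $X^{(n)}$, hence the tensors $\xi_a,\xi_b$, are the relevant ones.

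Step (ii), which I expect to need the most care, uses the finest $2$--primary decomposition of $X^{(n)}$ of \cite{MR2002820}: for a two--cell complex $X$ it writes $X^{(n)}$, hence the retract $\tilde L_n(X)$, as a wedge of spheres and of suspensions of copies of $X$. Fixing such a splitting $\tilde L_n(X)=\bigvee_\alpha Z_\alpha$ with projections $\pi_\alpha$, the point is that $g$ must be, up to a self--equivalence of $Y$, the inclusion of one of the summands $Z_{\alpha_0}\simeq\Sigma^{1+k(a+b)}X$: the nonvanishing of both $\frac1n\tilde\beta_{n*}(\xi_a)$ and $\frac1n\tilde\beta_{n*}(\xi_b)$, together with the compatibility of $g$ with the attaching map $\Sigma^q f$, prevents these two classes from being split off by sphere summands or by suspensions of $X$ sitting in other bidegrees, so $\pi_{\alpha_0}\circ g$ is an isomorphism on $\bar H_*(-;\mathbb{Z}/2\mathbb{Z})$. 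By the rigidity of the first paragraph $\pi_{\alpha_0}\circ g$ is then a homotopy equivalence, and $r:=(\pi_{\alpha_0}\circ g)^{-1}\circ\pi_{\alpha_0}$ is a left homotopy inverse of $g$, completing the proof.
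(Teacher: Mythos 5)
Your step (i) is fine and is in substance the same homology computation the paper makes: the composite of the inclusion $\Sigma(\mathrm{id}_X\wedge\alpha^{\wedge k})$ with $i_n\circ p_n$ sends the two generators to $\iota_1\otimes ad^k([u,v])(u)$ and $\iota_1\otimes ad^k([u,v])(v)$, the Jacobi/Dynkin manipulation you cite being exactly the paper's inductive identity $\tilde\beta_{2k+1*}(\iota_1\otimes w\otimes[u,v])=\iota_1\otimes[w',[u,v]]$. But injectivity of $g_*$ on mod--$2$ homology does not by itself yield a map $r\co\tilde L_n(X)\to\Sigma^{1+k(|u|+|v|)}X$, and your step (ii), which is supposed to supply that map, has a genuine gap. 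First, the supporting claim is false: the finest $2$--primary decomposition of $X^{(n)}$ for a two--cell complex does \emph{not} express it as a wedge of spheres and suspensions of $X$. Already $X\wedge X$ with $X=\mathbb{R}\mathrm{P}^2$ is a counterexample, since $Sq^2(u\otimes u)=v\otimes v\neq 0$ links the bottom cell to the top cell, while in any wedge of spheres and mod--$2$ Moore spaces this operation vanishes; the indecomposable functorial summands of \cite{MR2002820} are in general neither spheres nor suspensions of $X$. Second, even granting some wedge decomposition of $\tilde L_n(X)$, the assertion that the two classes cannot be ``split off'' by other summands, so that a single summand $\Sigma^{1+k(|u|+|v|)}X$ receives both generators and $\pi_{\alpha_0}\circ g$ is a homology isomorphism, is exactly the content of the proposition and is asserted rather than proved: a priori $g_*$ of each generator could be spread over several summands (note also that the relevant homology of $\tilde L_n(X)$ has dimension $\frac1n\binom{n}{k}>1$ in each of the two degrees once $k\geq 2$, so there is no dimension count forcing your conclusion, and the attaching map of $X$ need not be detected by any primary operation that could pin things down).

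The paper closes precisely this gap constructively. It first shows $\tilde L_3(X)\simeq\Sigma^{1+|u|+|v|}X$ (its mod--$2$ homology is two--dimensional, spanned by $[[u,v],u]$ and $[[u,v],v]$), and then defines an explicit candidate retraction $\varphi_k\co\Sigma X^{(2k+1)}\to\Sigma^{1+k(|u|+|v|)}X$ by iterating the projection $p_3$ smashed with identities, alongside the inductively defined $\phi_k$ (your $j$ composed with $i_{2k+1}\circ p_{2k+1}$ at each stage). The same Dynkin--Jacobi computation you performed then shows $\varphi_{k*}\circ\phi_{k*}=\mathrm{id}$ on the two generators, so $\varphi_k\circ\phi_k$ is a homotopy equivalence of finite--type $2$--local complexes; since $\phi_k$ factors through $\tilde L_{2k+1}(X)$ via $p_{2k+1}$, composing $\varphi_k\circ i_{2k+1}$ with the inverse of this equivalence gives the desired left homotopy inverse. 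If you want to salvage your outline, you must replace step (ii) by some such explicit construction of the splitting map; the homological part of your argument cannot do that work on its own.
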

\begin{proof}
For every odd integer $n \geq 3$,
let $i_n$ be the inclusion defined as in Equation \eqref{Eq:pi}.
Recall that 
\begin{align*}
(i_n\circ p_n)_*=\tilde\beta_n \co H_*(\Sigma X^{(n)}\mathbb{Z}/2\mathbb{Z}) \rightarrow H_*(\Sigma X^{(n)}\mathbb{Z}/2\mathbb{Z}).
\end{align*}
Let $\alpha$ be the spherical class given in Proposition \ref{prop1}
and let $\phi_1$ be the composite:
\begin{align*}
	\Sigma^{1+|u|+|v|} X \simeq \Sigma X \wedge S^{|u|+|v|}	\stackrel{id \wedge \alpha}{\longrightarrow} \Sigma X\wedge X\wedge X
														\stackrel{i_3\circ p_3}{\longrightarrow} \Sigma X\wedge X\wedge X.
\end{align*}
Take the mod--$2$ reduced homology:
\begin{align*}
	{\phi_1}_*(\iota_1\otimes u\otimes\iota_{|u|+|v|})&=(i_{3*}\circ p_{3*}) (\iota_1\otimes(uuv+uvu)) \\
	&=  \tilde\beta_{3*}(\iota_1\otimes(uuv+uvu))=\iota_1\otimes[[u,v],u].
\end{align*}
Similarly ${\phi_1}_*(\iota_1\otimes v\otimes\iota_{|u|+|v|})=\iota_1\otimes[[u,v],v]$.
Since $\bar H_*(\tilde L_3(X))$ is of dimension $2$,
the composite $p_3\circ\phi_1$ induces an isomorphism in mod--$2$ homology.
Because the spaces involved are CW--complexes of finite type,
hence we have the homotopy equivalence:
\begin{align*}
	p_3\circ\phi_1 \co \Sigma^{1+|u|+|v|} X \rightarrow \tilde L_3(X).
\end{align*}
Define $\phi_{k} \co \Sigma^{1+k(|u|+|v|)} X \rightarrow \Sigma X^{(2k+1)}$ inductively as the composite:
\begin{align*}
	\Sigma^{1+k(|u|+|v|)} X \simeq \Sigma^{1+(k-1)(|u|+|v|)} X \wedge S^{|u|+|v|}  \stackrel{\phi_{k-1}\wedge\alpha}{\longrightarrow} \Sigma X^{(2k-1)} \wedge X^{(2)}
													\stackrel{i_{2k+1} \circ p_{2k+1} }{\longrightarrow} \Sigma X^{(2k+1)}.
\end{align*}
Set $\varphi_1=p_3$.
Define $\varphi_{k} \co \Sigma X^{(2k+1)} \rightarrow \Sigma^{1+k(|u|+|v|)} X$ inductively as the composite:
\begin{align*}
	\Sigma X^{(2k+1)} 
	\stackrel{\varphi_{k-1} \wedge id}{\longrightarrow} \Sigma^{1+(k-1)(|u|+|v|)} X \wedge X \wedge X
	\stackrel{\Sigma^{(k-1)(|u|+|v|)} p_3}{\longrightarrow} \Sigma^{1+k(|u|+|v|)} X.
\end{align*}
%\marginnote{Point 12}\textcolor{red}{
Since $i_{2k+1} \circ p_{2k+1}$ factors through $\tilde L_{2k+1}(X)$,
the composite $\varphi_{k}\circ\phi_{k}$ factors through $\tilde L_{2k+1}(X)$.
%}
Let $ad(x)(y)=[y,x]$ and $ad^{i+1}(x)(y)=[ad^i(x)(y),x]$ for $i\geq 1$.
%\marginnote{Point 13}\textcolor{red}{
The coefficients are taken mod--$2$,
hence $ad([u,v])(u)=[[u,v],u]=[u,[u,v]]$.
%}

Thus it is sufficient to show that the following composite is a homotopy equivalence for all $k\geq 1$:
\begin{align*}
	\Sigma^{1+k(|u|+|v|)} X \stackrel{\phi_k}{\rightarrow} \Sigma X^{(2k+1)} \stackrel{\varphi_k}{\rightarrow} \Sigma^{1+k(|u|+|v|)} X.
\end{align*}
Since the spaces involved are CW--complexes of finite type,
it is enough to show that $\varphi_{k}\circ\phi_{k}$ induces an isomorphism on mod--$2$ homology.
Explicitly,
we will prove the following statements by induction on $k$: 
\begin{equation}\label{Eq:2}
\begin{split}
	\varphi_{k*} \circ \phi_{k*} (\iota_{1+k(|u|+|v|)} \otimes u )=& \varphi_{k*} (\iota_1 \otimes ad^{k}([u,v])(u))=\iota_{1+k(|u|+|v|)} \otimes u, \\
	\varphi_{k*} \circ \phi_{k*} (\iota_{1+k(|u|+|v|)} \otimes v )=& \varphi_{k*} (\iota_1 \otimes ad^{k}([u,v])(v))=\iota_{1+k(|u|+|v|)} \otimes v. 
\end{split}
\end{equation}
The case of $k=1$ has already been shown.
Suppose that the statement is true for all $k'<k$.
%Notice the following in mod--$2$ homology.
We have
\begin{align*}
	\tilde\beta_{3*}(\iota_1\otimes ad([u,v])(u))=\tilde\beta_{3*}(\iota_1\otimes[[u,v],u])=\iota_1\otimes[[u,v],u]=\iota_1\otimes ad([u,v])(u).
\end{align*}
Also
\begin{align*}
	&\tilde\beta_{2k+1*}(\iota_1\otimes[u,v]\otimes(ad^{k-1}([u,v])(u)) \\
	=&\tilde\beta_{2k+1*}(\iota_1\otimes uv \otimes(ad^{k-1}([u,v])(u)) + 
%	\marginnote{Point 14}\textcolor{red}{
	\tilde\beta_{2k+1*}
%	}
	(\iota_1\otimes vu\otimes(ad^{k}([u,v])(u))\\
	=&2\tilde\beta_{2k+1*}(\iota_1\otimes uv \otimes(ad^{k-1}([u,v])(u)) \\
	=&0.
\end{align*}
Hence we obtain
\begin{align*}
	&\tilde\beta_{2k+1*}(\iota_1\otimes ad^{k}([u,v])(u))\\
	=&\tilde\beta_{2k+1*}(\iota_1\otimes[ad^{k-1}([u,v])(u),[u,v]]) \\
	=&\tilde\beta_{2k+1*}(\iota_1\otimes(ad^{k-1}([u,v])(u))\otimes[u,v])+\tilde\beta_{2k+1*}(\iota_1\otimes[u,v]\otimes(ad^{k-1}([u,v])(u)))\\
	=&\tilde\beta_{2k+1*}(\iota_1\otimes(ad^{k-1}([u,v])(u))\otimes[u,v]).
\end{align*}
Furthermore
\begin{align*}
	&\tilde\beta_{2k+1*}(\iota_1\otimes(ad^{k-1}([u,v])(u))\otimes[u,v])\\
	=&\tilde\beta_{2k+1*}(\iota_1\otimes(ad^{k-1}([u,v])(u))\otimes uv) + \tilde\beta_{2k+1*}(\iota_1\otimes(ad^{k-1}([u,v])(u))\otimes vu)\\
	=&[[[\tilde\beta_{2k-1*}(\iota_1\otimes(ad^{k-1}([u,v])(u)))],u],v] + [[[\tilde\beta_{2k-1*}(\iota_1\otimes(ad^{k-1}([u,v])(u)))],v],u]\\
	=&[\tilde\beta_{2k-1*}(\iota_1\otimes ad^{k-1}([u,v])(u)),[u,v]] \\
	=&ad([u,v])(\tilde\beta_{2k-1*}(\iota_1\otimes ad^{k-1}([u,v])(u))).
\end{align*}
%\marginnote{Point 15}\textcolor{red}{
where the third equality is due to Jacobi identity.
%}
Thus we have
\begin{align*}
	&\tilde\beta_{2k+1*}(\iota_1\otimes ad^{k}([u,v])(u))\\
	=&ad([u,v])(\tilde\beta_{2k-1*}(\iota_1\otimes ad^{k-1}([u,v])(u)))\\
	=&ad^2([u,v])(\tilde\beta_{2k-3*}(\iota_1\otimes ad^{k-2}([u,v])(u))).
\end{align*}
Hence we obtain that
\begin{align*}
	&\tilde\beta_{2k+1*}(\iota_1\otimes ad^{k}([u,v])(u))
	=\iota_1\otimes ad^{k}([u,v])(u).
\end{align*}
By induction hypothesis, we have
\begin{align*}
	\phi_{k-1*}(\iota_{1+(k-1)(|u|+|v|)}\otimes u) = \iota_1\otimes ad^{k-1}([u,v])(u).
\end{align*}
It follows that
%\marginnote{Point 16}\textcolor{red}{
\begin{align*}
	\phi_{k*}(\iota_{1+k(|u|+|v|)}\otimes u)
	&=(i_{2k+1}\circ p_{2k+1})_* (\phi_{k-1*} (\iota_{1+(k-1)(|u|+|v|)}\otimes u) \otimes [u,v])\\
	&=\tilde\beta_{2k+1*}(\iota_1\otimes ad^{k-1}([u,v])(u) \otimes [u,v])\\
	&=\tilde\beta_{2k+1*}(\iota_1\otimes ad^{k}([u,v])(u))\\
	&=\iota_1\otimes ad^{k}([u,v])(u).
\end{align*}
%}
Similarly
\begin{align*}
	\phi_{k*}(\iota_{1+k(|u|+|v|)}\otimes v )
	&=\iota_1\otimes ad^{k}([u,v])(v).
\end{align*}
Also we have
\begin{align*}
	&(\varphi_{k*}\circ\tilde\beta_{2k+1*})(\iota_1\otimes ad^{k}([u,v])(u)) \\
	=&\varphi_{k*}(\iota_1\otimes ad^{k-1}([u,v])(u)\otimes [u,v]))\\
	=&(\Sigma^{(k-1)(|u|+|v|)}p_3)_*\circ((\varphi_{k-1*}\otimes id)(\iota_1\otimes ad^{k-1}([u,v])(u)\otimes [u,v]))\\
	=&(\Sigma^{(k-1)(|u|+|v|)}p_3)_*\circ(\varphi_{k-1*}((\iota_1\otimes ad^{k-1}([u,v])(u))\otimes [u,v])\\
	=&(\Sigma^{(k-1)(|u|+|v|)}p_3)_*(\iota_{1+(k-1)(|u|+|v|)}\otimes u\otimes [u,v])\\
	=&\iota_{1+k(|u|+|v|)}\otimes u.
\end{align*}
Since
$\tilde\beta_{2k+1*}=i_{2k+1*}\circ p_{2k+1*}$ and
$\tilde\beta_{2k+1*}=\tilde\beta_{2k+1*}\circ\tilde\beta_{2k+1*}$,
we have
\begin{align*}
\varphi_{k*}\circ\phi_{k*}=\varphi_{k*}\circ\tilde\beta_{2k+1*}\circ\phi_{k*}.
\end{align*}
Hence
\begin{align*}
	&(\varphi_{k*}\circ\phi_{k*})(\iota_{1+k(|u|+|v|)}\otimes u)\\
	=&(\varphi_{k*}\circ\tilde\beta_{2k+1*}\circ\phi_{k*})(\iota_{1+(k-1)(|u|+|v|)}\otimes u\otimes \iota_{|u|+|v|})\\
	=&\iota_{1+k(|u|+|v|)}\otimes u.
\end{align*}
Similarly we obtain
\begin{align*}
	&(\varphi_{k*}\circ\phi_{k*})(\iota_{1+k(|u|+|v|)}\otimes v)\\
	=&(\varphi_{k*}\circ\tilde\beta_{2k+1*}\circ\phi_{k*})(\iota_{1+(k-1)(|u|+|v|)}\otimes v\otimes \iota_{|u|+|v|})\\
	=&\iota_{1+k(|u|+|v|)}\otimes v.
\end{align*}
This completes the induction step.
Thus \eqref{Eq:2} holds.
As noted earlier,
this implies the composite $\varphi_{k}\circ\phi_{k}$ is a homotopy equivalence,
which completes the proof.
\end{proof}
We can obtain a weaker result when the space studied is a finite wedge product of $2$--cell CW--complexes.
\begin{prop}
\label{prop3}
For every $1 \leq k \leq n$, let $X_k$ be a path--connected $2$--local $2$--cell CW--complexes
such that $\bar{H}_*(X_k;\mathbb{Z}/2\mathbb{Z})$ is of dimension 2 with generators $u_k$ and $v_k$ such that $|u_k|<|v_k|$.
Then the following map has a left homotopy inverse:
\begin{align*}
	\Sigma\wedge_{k=1}^{n} \Sigma^{|u_k|+|v_k|}{X_k} \rightarrow \tilde L_{3}(\wedge_{k=1}^{n} {X_k}).
\end{align*}
%\marginnote{Point 17}\textcolor{red}{
%has a left homotopy inverse.
%}
\end{prop}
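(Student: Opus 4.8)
The plan is to run the $k=1$ case of Proposition~\ref{prop2} once for each smash factor. Write $Y=\wedge_{k=1}^{n}X_k$, $V_k=\bar H_*(X_k;\mathbb{Z}/2\mathbb{Z})=\langle u_k,v_k\rangle$, $W=\bar H_*(Y;\mathbb{Z}/2\mathbb{Z})=\bigotimes_k V_k$, and $\sigma=\sum_k(|u_k|+|v_k|)$; let $\mathcal B$ be the monomial basis of $W$ and, for $w=\bigotimes_k w_k\in\mathcal B$, let $\bar w=\bigotimes_k\bar w_k$ be the monomial obtained by interchanging $u_k\leftrightarrow v_k$ in every factor. Smashing the spherical classes $\alpha_k\co S^{|u_k|+|v_k|}\to X_k\wedge X_k$ of Proposition~\ref{prop1} and rearranging smash factors produces a map $\alpha\co S^{\sigma}\to Y^{(2)}$ with $\alpha_*(\iota_\sigma)=\bigotimes_k[u_k,v_k]=\sum_{w\in\mathcal B}w\otimes\bar w\in W^{\otimes2}$. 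Using $\Sigma\wedge_k\Sigma^{|u_k|+|v_k|}X_k\cong\Sigma Y\wedge S^{\sigma}$ and $\Sigma Y\wedge Y^{(2)}\cong\Sigma Y^{(3)}$, I take the map of the proposition to be
\begin{align*}
g\co\Sigma\wedge_{k=1}^{n}\Sigma^{|u_k|+|v_k|}X_k\xrightarrow{\ \mathrm{id}_{\Sigma Y}\wedge\alpha\ }\Sigma Y^{(3)}\xrightarrow{\ p_3\ }\tilde L_{3}(Y),
\end{align*}
with $p_3\co\Sigma Y^{(3)}\to\tilde L_3(Y)$ and $i_3\co\tilde L_3(Y)\to\Sigma Y^{(3)}$ as in \eqref{Eq:pi}.

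To build a left homotopy inverse I would first apply Proposition~\ref{prop2} (case $k=1$) to each $X_k$, obtaining a homotopy equivalence $\Sigma^{1+|u_k|+|v_k|}X_k\simeq\tilde L_3(X_k)$ and hence a map $q_k\co\Sigma X_k^{(3)}\to\tilde L_3(X_k)\to\Sigma^{1+|u_k|+|v_k|}X_k$ (projection followed by a homotopy inverse of that equivalence). On mod--$2$ homology $q_k$ sends $\iota_1\otimes a\otimes b\otimes c$ to the coordinate of $[[a,b],c]$ in the basis $\{[[u_k,v_k],u_k],[[u_k,v_k],v_k]\}$ of $L_3(V_k)$; concretely $q_{k*}(a\otimes b\otimes c)=c$ if $a\neq b$ and $=0$ if $a=b$. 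Applying $q_1,\dots,q_n$ successively to the smash factors of $\Sigma Y^{(3)}\cong\Sigma\wedge_k X_k^{(3)}$, rerouting suspension coordinates as needed, produces a map $\psi\co\Sigma Y^{(3)}\to\Sigma\wedge_k\Sigma^{|u_k|+|v_k|}X_k$ with $\psi_*=\bigotimes_k q_{k*}$ after the identification of $W^{\otimes3}$ with $\bigotimes_k V_k^{\otimes3}$. I then set $r=\psi\circ i_3$ and claim $r\circ g\simeq\mathrm{id}$.

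Since the domain is a simply connected $2$--local CW--complex of finite type, it suffices to show $(r\circ g)_*=\psi_*\circ(i_3p_3)_*\circ(\mathrm{id}\wedge\alpha)_*$ is the identity on mod--$2$ homology. Using $(i_3p_3)_*=\tilde\beta_{3*}$, this reduces to evaluating $\psi_*\bigl(\iota_1\otimes\sum_{w'}[[w_0,w'],\bar{w'}]\bigr)$ for each $w_0\in\mathcal B$. Expanding $[[w_0,w'],\bar{w'}]$ into its four associative monomials, rearranging each into $\bigotimes_k V_k^{\otimes3}$, and applying $\psi_*$ — noting that a monomial $\bigotimes_k(x_k\otimes y_k\otimes z_k)$ is sent by $\bigotimes_k q_{k*}$ to $z$ when $y=\bar x$ and to $0$ otherwise — one finds that the $w'$--summand contributes $([w'=w_0]+1)\,w_0$, so summing over the $2^{n}$ choices of $w'$ leaves $(1+2^{n})w_0=w_0$ mod $2$. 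Hence $(r\circ g)_*=\mathrm{id}$, so $r\circ g$ is a homotopy equivalence and $(r\circ g)^{-1}\circ r$ is a left homotopy inverse of $g$.

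The one genuinely delicate step is this final computation: the global Dynkin--Specht--Wever bracketing $\tilde\beta_{3*}$ lives on $W^{\otimes3}=(\bigotimes_k V_k)^{\otimes3}$, whereas the factorwise maps $q_{k*}$ only become visible after the identification $(\wedge_k X_k)^{(3)}\cong\wedge_k(X_k^{(3)})$, and one must verify that the resulting mod--$2$ cancellations collapse exactly to the identity. Setting up $\psi$, that is, tracking the suspension coordinates as the $q_k$ are applied in turn, is the other bookkeeping point, but it is routine.
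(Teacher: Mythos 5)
Your argument is correct and takes essentially the same route as the paper's proof: you build the retraction out of the factorwise projections $\Sigma X_k^{(3)}\to \Sigma^{1+|u_k|+|v_k|}X_k\simeq \tilde L_3(X_k)$ (your $\psi$, the paper's $\bar p$ up to shuffle) composed with the canonical inclusion $i_3$ of $\tilde L_3(\wedge_k X_k)$, and check on mod--$2$ homology, using $i_{3*}p_{3*}=\tilde\beta_{3*}$, that the composite with the natural inclusion is an equivalence. The only cosmetic differences are that the paper's inclusion is the smash of the factorwise inclusions $i^k_3$ rather than your $\mathrm{id}_{\Sigma Y}\wedge\alpha$ (the two induce the same map on mod--$2$ homology since $[a,b]=[b,a]$ mod $2$), and the paper organizes the same cancellation count via the four shuffle operators $\gamma_1,\dots,\gamma_4$ instead of your per--$w'$ bookkeeping.
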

\begin{proof}
As shown in the proof of Proposition \ref{prop2},
for each space $X_k$ we have a canonical projection 
\begin{align*}
p^k_3 \co \Sigma X_k^{(3)} \rightarrow \Sigma^{1+|u_k|+|v_k|}X_k \simeq \tilde L_3 (X_k)
\end{align*}
and a canonical inclusion
\begin{align*}
i^k_3 \co \Sigma^{1+|u_k|+|v_k|}X_k \simeq \tilde L_3 (X_k) \rightarrow \Sigma X_k^{(3)}.
\end{align*}
The map $p^1_3,\ldots,p^n_3$ induce the projection
\begin{align*}
	\bar p \co \Sigma\wedge_{k=1}^{n} {X_k}^{(3)}\rightarrow \Sigma\wedge_{k=1}^{n} \Sigma^{|u_k|+|v_k|}{X_k}.
\end{align*}
The map $i^1_3,\ldots,i^n_3$ induce the inclusion
\begin{align*}
	\bar i \co \Sigma\wedge_{k=1}^{n} \Sigma^{|u_k|+|v_k|}{X_k} \rightarrow \Sigma\wedge_{k=1}^{n} {X_k}^{(3)}.
\end{align*}
Let $\theta$ be the composite
%\marginnote{Point 18}\textcolor{red}{
\begin{align*}
	\Sigma\wedge_{k=1}^{n} {X_k}^{(3)} \stackrel{\Sigma\tau}{\longrightarrow} \Sigma(\wedge_{k=1}^{n} {X_k})^{(3)} 
																				\stackrel{p}{\longrightarrow} \tilde L_{3}(\wedge_{k=1}^{n} {X_k})
																				\stackrel{i}{\longrightarrow} \Sigma(\wedge_{k=1}^{n} {X_k})^{(3)}
																				\stackrel{\Sigma\tau'}{\longrightarrow}\Sigma\wedge_{k=1}^{n} {X_k}^{(3)},
\end{align*}
%}
where the maps $\tau$ and $\tau'$ switch positions.
Explicitly,
for $x_k,y_k,z_k\in H_*(X_k;\mathbb{Z}/2\mathbb{Z})$,
the maps $\tau$ and $\tau'$ induce the following maps on homology:
\begin{align*}
	&\tau_* (\otimes_{k=1}^n x_k y_k z_k) = (\Pi_{k=1}^n x_k)\otimes(\Pi_{k=1}^n y_k)\otimes(\Pi_{k=1}^n z_k),\\
	&\tau'_* ((\Pi_{k=1}^n x_k)\otimes(\Pi_{k=1}^n y_k))\otimes(\Pi_{k=1}^n z_k)) = \otimes_{k=1}^n x_k y_k z_k.
\end{align*}
Recall that for the canonical inclusion $i$ and canonical projection $p$,
we have $\tilde\beta_{3*}= i_* \circ p_*$.
Then $\theta_*(\iota_1\otimes_{k=1}^n x_k y_k z_k)$ is given by the following.
\begin{align*}
	\iota_1\otimes_{k=1}^n x_k y_k z_k 	&\stackrel{(\Sigma\tau)_*}{\longmapsto} 			\iota_1\otimes(\Pi_{k=1}^n x_k)\otimes(\Pi_{k=1}^n y_k)\otimes(\Pi_{k=1}^n z_k)\\
																			&\stackrel{(\tilde\beta_3)_*}{\longmapsto} 	\iota_1\otimes[[(\Pi_{k=1}^n x_k),(\Pi_{k=1}^n y_k)],(\Pi_{k=1}^n z_k)] \\
																			&\stackrel{(\Sigma\tau)_*}{\longmapsto}				\iota_1\otimes(\otimes_{k=1}^n x_k y_k z_k + \otimes_{k=1}^n y_k x_k z_k + \otimes_{k=1}^n z_k x_k y_k + \otimes_{k=1}^n z_k y_k x_k).
\end{align*}
%\marginnote{Point 19}
Let $\gamma_1$, $\gamma_2$, $\gamma_3$, $\gamma_4$ be the mapping defined by
\begin{align*}
\gamma_1(\iota_1\otimes_{k=1}^n x_k y_k z_k) &= \iota_1\otimes(\otimes_{k=1}^n x_k y_k z_k),\\
\gamma_2(\iota_1\otimes_{k=1}^n x_k y_k z_k) &= \iota_1\otimes(\otimes_{k=1}^n y_k x_k z_k),\\
\gamma_3(\iota_1\otimes_{k=1}^n x_k y_k z_k) &= \iota_1\otimes(\otimes_{k=1}^n z_k x_k y_k),\\
\gamma_4(\iota_1\otimes_{k=1}^n x_k y_k z_k) &= \iota_1\otimes(\otimes_{k=1}^n z_k y_k x_k).
\end{align*}
Therefore $\theta_* = \gamma_1 + \gamma_2 + \gamma_3 + \gamma_4$
and $(\bar p\circ\theta)_* = \bar p_* \circ \theta_* = \bar p_* \circ \gamma_1 + \bar p_* \circ \gamma_2 + \bar p_* \circ \gamma_3 + \bar p_* \circ \gamma_4$.

Since $0=\tilde\beta_{3*}(\iota_1\otimes u_k u_k v_k)= i^k_{3*} \circ p^k_{3*}(\iota_1\otimes u_k u_k v_k)$ and $i^k_{3*}$ is a monomorphism,
one gets $p^k_{3*}(\iota_1\otimes u_k u_k v_k)=0$.
Recall that $\bar p$ is induced by $p^1_3,\ldots,p^n_3$.
If $x_l y_l z_l = u_l u_l v_l$ for some $1\leq l \leq n$,
then $\bar p_*(\iota_1\otimes_{k=1}^n x_k y_k z_k )=0$.
%\begin{align}
%	\bar p_*(\iota_1\otimes ... \otimes u_l u_l v_l \otimes ...)=0 \label{Eq:1}
%\end{align}
%for any $1\leq l \leq n$.
Since tensor product is bilinear,
\begin{align*}
\otimes_{k=1}^n(u_k u_k v_k + v_k u_k u_k) = \Sigma_{j=1}^{2^n} \otimes_{k=1}^n x_k^j y_k^j z_k^j
\end{align*}
where $x_k^j y_k^j z_k^j = u_k u_k v_k$ or $v_k u_k u_k$ for $1\leq j \leq 2^n$.

Hence if $x_l^j y_l^j z_l^j = u_l u_l v_l$ for some $1\leq l \leq n$, we have
\begin{align*}
\bar p_* \circ \gamma_1 (\iota_1\otimes_{k=1}^n x_k^j y_k^j z_k^j) = \bar p_* (\iota_1\otimes_{k=1}^n x_k^j y_k^j z_k^j ) 
= \bar p_*(\iota_1\otimes ... \otimes u_l u_l v_l \otimes ...) = 0.
\end{align*}
It follows that $\bar p_* \circ \gamma_1 (\iota_1\otimes_{k=1}^n x_k^j y_k^j z_k^j)$ is non--zero if and only if
\begin{align*}
\otimes_{k=1}^n x_k^j y_k^j z_k^j = \otimes_{k=1}^n v_k u_k u_k.
\end{align*}
We must have
\begin{align*}
\bar p_* \circ \gamma_1 (\iota_1\otimes_{k=1}^n(u_k u_k v_k + v_k u_k u_k))
=&\bar p_* \circ \gamma_1 ( \Sigma_{j=1}^{2^n} \iota_1\otimes_{k=1}^n x_k^j y_k^j z_k^j )\\
=&\Sigma_{j=1}^{2^n} \bar p_* \circ \gamma_1 ( \iota_1\otimes_{k=1}^n x_k^j y_k^j z_k^j )\\
=&\bar p_* \circ \gamma_1 (\iota_1\otimes_{k=1}^n v_k u_k u_k)\\
=&\bar p_*(\iota_1\otimes_{k=1}^n v_k u_k u_k)\\
=&\iota_1\otimes_{k=1}^n(\iota_{|u_k|+|v_k|}\otimes u_k).
\end{align*}
Similarly
\begin{align*}
\bar p_* \circ \gamma_2 (\iota_1\otimes_{k=1}^n(u_k u_k v_k + v_k u_k u_k))
=& \bar p_* \circ \gamma_2 ( \Sigma_{j=1}^{2^n} \iota_1\otimes_{k=1}^n x_k^j y_k^j z_k^j ) \\
=& \Sigma_{j=1}^{2^n} \bar p_* \circ \gamma_2 ( \iota_1\otimes_{k=1}^n x_k^j y_k^j z_k^j ) \\
=& \bar p_* \circ \gamma_2 (\iota_1\otimes_{k=1}^n v_k u_k u_k) \\
=& \bar p_*(\iota_1\otimes_{k=1}^n u_k v_k u_k) \\
=& \iota_1\otimes_{k=1}^n(\iota_{|u_k|+|v_k|}\otimes u_k),
\end{align*}
\begin{align*}
\bar p_* \circ \gamma_4 (\iota_1\otimes_{k=1}^n(u_k u_k v_k + v_k u_k u_k))
=& \bar p_* \circ \gamma_4 ( \Sigma_{j=1}^{2^n} \iota_1\otimes_{k=1}^n x_k^j y_k^j z_k^j ) \\
=& \Sigma_{j=1}^{2^n} \bar p_* \circ \gamma_4 ( \iota_1\otimes_{k=1}^n x_k^j y_k^j z_k^j ) \\
=& \bar p_* \circ \gamma_4 (\iota_1\otimes_{k=1}^n u_k u_k v_k) \\
=& \bar p_*(\iota_1\otimes_{k=1}^n v_k u_k u_k) \\
=& \iota_1\otimes_{k=1}^n(\iota_{|u_k|+|v_k|}\otimes u_k).
\end{align*}
Also notice that
$\bar p_* \circ \gamma_3 ( \otimes_{k=1}^n x_k^j y_k^j z_k^j )$
is non--zero for $1\leq j \leq 2^n$.
We get
\begin{align*}
\bar p_* \circ \gamma_3 (\iota_1\otimes_{k=1}^n(u_k u_k v_k + v_k u_k u_k))
=& \bar p_* \circ \gamma_3 ( \Sigma_{j=1}^{2^n} \iota_1\otimes_{k=1}^n x_k^j y_k^j z_k^j ) \\
=& \Sigma_{j=1}^{2^n} \bar p_* \circ \gamma_3 ( \iota_1\otimes_{k=1}^n x_k^j y_k^j z_k^j ) \\
=& 2^n \iota_1\otimes_{k=1}^n(\iota_{|u_k|+|v_k|}\otimes u_k)\\
=& 0.
\end{align*}
Therefore we have
\begin{align*}
&(\bar p\circ\theta)_*(\iota_1\otimes_{k=1}^n(u_k u_k v_k + v_k u_k u_k))\\
=&\bar p_* \circ \gamma_1 (\iota_1\otimes_{k=1}^n(u_k u_k v_k + v_k u_k u_k))
+\bar p_* \circ \gamma_2 (\iota_1\otimes_{k=1}^n(u_k u_k v_k + v_k u_k u_k))+\\
&\bar p_* \circ \gamma_3 (\iota_1\otimes_{k=1}^n(u_k u_k v_k + v_k u_k u_k))
+\bar p_* \circ \gamma_4 (\iota_1\otimes_{k=1}^n(u_k u_k v_k + v_k u_k u_k))\\
=&\iota_1\otimes_{k=1}^n(\iota_{|u_k|+|v_k|}\otimes u_k).
\end{align*}
Similarly we have
\begin{align*}
	(\bar p\circ\theta)_*(\iota_1\otimes_{k=1}^n(v_k v_k u_k + u_k v_k v_k))=\iota_1\otimes_{k=1}^n(\iota_{|u_k|+|v_k|}\otimes v_k).
\end{align*}
Thus
\begin{align*}
	\Sigma\wedge_{k=1}^{n} \Sigma^{|u_k|+|v_k|}{X_k} \stackrel{\bar i}{\rightarrow} \Sigma\wedge_{k=1}^{n} {X_k}^{(3)} \stackrel{\theta}{\rightarrow}
	\Sigma\wedge_{k=1}^{n} {X_k}^{(3)} \stackrel{\bar p}{\rightarrow} \Sigma\wedge_{k=1}^{n} \Sigma^{|u_k|+|v_k|}{X_k}
\end{align*}
induces an isomorphism on mod--$2$ homology.
Since the spaces considered are CW--complexes of finite type,
the composite $\bar p\circ \theta \circ \bar i$ is a homotopy equivalence.
Because $\theta$ factors through $\tilde L_{3}(\wedge_{k=1}^{n} {X_k})$, the statement follows.
\end{proof}
%\marginnote{Point 20}\textcolor{red}{
The following Proposition \ref{thm-wudecom} is due to Paul Selick and Jie Wu.
The case when $X=\Sigma X'$ is a suspension is shown in \cite{MR1650426};
if $X$ is not a suspension, one can modify an idea of Paul Selick and Jie Wu in \cite{MR2221079} to prove this proposition.
%}
\begin{prop}[Paul Selick and Jie Wu]
\label{thm-wudecom}
	Let $X$ be a path--connected $2$--local CW--complex of finite type.
	Let $2<k_1<k_2<...$ be all the odd prime numbers in increasing order.
	Then there exists a topological space $A$ such that 
\begin{align*}
	\Omega\Sigma X \simeq \prod_j \Omega\tilde{(L_{k_j}}(X))\times A
\end{align*}
localized at $2$.
\begin{flushright}
$\Box$
\end{flushright}
\end{prop}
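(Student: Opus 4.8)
The plan is to derive the proposition from the Selick--Wu theory of natural coalgebra decompositions of tensor algebras together with its geometric realization, treating the suspension case and the general case separately. Throughout put $V=\bar H_*(X;\mathbb{Z}/2\mathbb{Z})$, so that $H_*(\Omega\Sigma X;\mathbb{Z}/2\mathbb{Z})$ is the tensor Hopf algebra $T(V)$, and recall that the James construction gives a $2$--local equivalence $\Sigma\Omega\Sigma X\simeq\bigvee_{n\ge 1}\Sigma X^{(n)}$. For odd $n$ the co--$H$ structure on $\Sigma X^{(n)}$ produces the idempotent $\frac1n\tilde\beta_{n*}$ and the retract $\tilde L_n(X)$ of Section 2, whose mod--$2$ homology is the suspended Lie power $\Sigma L_n(V)$, i.e.\ the image of $\tilde\beta_{n*}$. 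The goal is to show that each $\Omega\tilde L_{k_j}(X)$, for $k_j$ an odd prime, is a natural homotopy retract of $\Omega\Sigma X$, and that these retractions assemble, across all odd primes, into a single product splitting whose complement is the desired space $A$.

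When $X=\Sigma X'$ is a suspension this is exactly the natural decomposition of $\Omega\Sigma\Sigma X'$ established in \cite{MR1650426}: there $X^{(n)}=\Sigma^{n}X'^{(n)}$, the self--maps used to build $\tilde L_n(X)$ descend to self--maps of $X'^{(n)}$, one has $\tilde L_{k_j}(X)\simeq\Sigma L_{k_j}(\Sigma X')$, and the asserted equivalence is a direct translation of the decomposition of loop suspensions proved there, applied to $Y=\Sigma X'$. So the real task is to remove the hypothesis that $X$ be a suspension.

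For general $X$ one cannot form $\hocolim_{\frac1n\beta_n}X^{(n)}$, since $X^{(n)}$ carries no co--$H$ structure; following the idea of Selick and Wu in \cite{MR2221079} one works one suspension higher. The James--Hopf invariants $H_n\co\Omega\Sigma X\to\Omega\Sigma X^{(n)}$ are natural maps whose effect on $H_*(-;\mathbb{Z}/2\mathbb{Z})$ is computed from the shuffle coproduct of $T(V)$; composing $H_{k_j}$ with $\Omega$ of the projection $\Sigma X^{(k_j)}\to\tilde L_{k_j}(X)$ (legitimate because $\tilde L_{k_j}(X)$ is a retract of $\Sigma X^{(k_j)}$, hence $\Omega\tilde L_{k_j}(X)$ is a retract of $\Omega\Sigma X^{(k_j)}$) yields natural maps $r_j\co\Omega\Sigma X\to\Omega\tilde L_{k_j}(X)$. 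A section is built from the James splitting $\Sigma\Omega\Sigma X\simeq\bigvee_n\Sigma X^{(n)}$ and the loop multiplication on $\Omega\Sigma X$, in the style of the Hilton--Milnor theorem. One then checks on mod--$2$ homology, using \eqref{Eq:pi} and the identification of the $\tilde\beta_n$--idempotents with the Dynkin projections onto the Lie powers, that over distinct odd primes the composites $\Omega\Sigma X\to\Omega\tilde L_{k_j}(X)\to\Omega\Sigma X$ are, up to homotopy, mutually orthogonal idempotents; since all spaces in sight are $2$--local of finite type, the relevant mapping telescopes and the (possibly infinite) product converge, and one extracts the complementary factor $A$. Comparing the induced map on $H_*(-;\mathbb{Z}/2\mathbb{Z})$ with the corresponding algebraic coalgebra decomposition of $T(V)$ from \cite{MR1650426} (and the finest splitting of $X^{(n)}$ from \cite{MR2002820}) shows the assembled map $\prod_j\Omega\tilde L_{k_j}(X)\times A\to\Omega\Sigma X$ is a homotopy equivalence.

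The principal obstacle is precisely this last step in the non--suspension case: manufacturing the sections and the complementary space $A$, that is, realizing the purely algebraic coalgebra splitting of $T(V)$ by an honest product decomposition of $\Omega\Sigma X$ when $X$ is not a suspension. That the James--Hopf invariants assemble into orthogonal idempotents over the odd primes, and that passage to the infinite product introduces no convergence obstruction, is the technical heart of the argument and is exactly what the method of \cite{MR2221079} is designed to supply; one should also verify that the retract $\tilde L_{k_j}(X)$ being a co--$H$ space rather than a genuine suspension is no impediment to the loop--space manipulations above.
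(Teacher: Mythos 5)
Your proposal takes essentially the same route as the paper: the paper gives no internal proof of this proposition, citing \cite{MR1650426} for the suspension case and stating that the general case follows by modifying the method of \cite{MR2221079}, which is exactly the two-step strategy you outline, with the technical heart deferred to those same references. Your sketch of the James--Hopf/idempotent machinery is a reasonable gloss on that cited method, so there is nothing substantive to compare beyond noting the agreement.
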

Theorem \ref{thm1} and \ref{thm2} are consequences of Proposition \ref{prop2}, \ref{prop3} and \ref{thm-wudecom}.
\begin{proof}[Proof of Theorem \ref{thm1}]
By Proposition \ref{prop3} and \ref{thm-wudecom},
the following map has a left homotopy inverse:
\begin{align*}
	\Omega\Sigma\wedge_{i=1}^{n} \Sigma^{|u_i|+|v_i|}{X_i} \rightarrow \Omega\Sigma\wedge_{i=1}^{n} {X_i}.
\end{align*}
%\marginnote{Point 21}\textcolor{red}{
%has a left homotopy inverse.
%}
For each $i$, the space $\Sigma^{|u_i|+|v_i|}{X_i}$ is again a $2$--cell complex.
By induction,
we can conclude that $\Omega\Sigma\wedge_{i=1}^{n} \Sigma^{\frac{3^k-1}{2}(|u_i|+|v_i|)}{X_i}$
%\marginnote{Point 22}\textcolor{red}{
retracts off
%}
$\Omega\Sigma\wedge_{i=1}^{n} {X_i}$.
\end{proof}
\begin{proof}[Proof of Theorem \ref{thm2}]
Recall from in Proposition \ref{prop2} that the following map has a left homotopy inverse:
\begin{align*}
	\Sigma^{1+k(|u|+|v|)} X \rightarrow \tilde{L}_{2k+1}(X).
\end{align*}
%\marginnote{Point 21}\textcolor{red}{
%has a left homotopy inverse.
%}
Therefore $\Omega\Sigma^{1+k(|u|+|v|)}X$
%\marginnote{Point 22}\textcolor{red}{
retracts off
%}
$\Omega\tilde{L}_{2k+1}(X)$.
The statement follows from Proposition \ref{thm-wudecom}.
\end{proof}
\section {Proof of Corollary \ref{coro1} and \ref{coro2} and Some Remarks}
First we give proofs of Corollary \ref{coro1} and \ref{coro2}.
\begin{proof}[Proof of Corollary \ref{coro1}]
Theorem \ref{thm1} implies that $\pi_{m}(\Sigma\wedge_{i=1}^{n} \Sigma^{\frac{3^k-1}{2}(|u_i|+|v_i|)}{X_i}) $ is a summand of $\pi_{m}(\Sigma\wedge_{i=1}^{n} {X_i})$ for $m\geq 1$.
%\marginnote{Point 23}\textcolor{red}{
For $k\geq 0$,
let $b_k=\Sigma_{i=1}^{n}(\frac{3^k-1}{2}(|u_i|+|v_i|))$.
%}
When $k$ is large enough such that $j\leq b_k+2m$,
the Freudenthal suspension theorem implies
\begin{align*}
	\pi_{j+b_k}(\Sigma\wedge_{i=1}^{n} \Sigma^{\frac{3^k-1}{2}(|u_i|+|v_i|)}{X_i}) \cong \pi^s_{j+b_k}(\Sigma\wedge_{i=1}^{n} \Sigma^{\frac{3^k-1}{2}(|u_i|+|v_i|)}{X_i}).
\end{align*}
Thus
\begin{align*}
	\pi^s_{j+b_k}(\Sigma\wedge_{i=1}^{n} \Sigma^{\frac{3^k-1}{2}(|u_i|+|v_i|)}{X_i}) \cong \pi^s_j(\Sigma\wedge_{i=1}^{n} {X_i})
\end{align*}
The statement follows.
\end{proof}
\begin{proof}[Proof of Corollary \ref{coro2}]
As an immediate consequence of Theorem \ref{thm2},
for $k\geq 1$ and $n\geq 2$, the following map has a left homotopy inverse
\begin{align*}
	\Omega P^{n+1+k(2n-1)}(2)\hookrightarrow \Omega P^{n+1}(2).
\end{align*}
%\marginnote{Point 24}\textcolor{red}{
%has a left homotopy inverse for
%}
Further,
for $0\leq m \leq 3$,
the following map has a left homotopy inverse:
\begin{align*}
	\Omega P^{(4+8k)n+(2k+1)m-3k}(2)\hookrightarrow \Omega P^{4n+m}(2).
\end{align*}
%\marginnote{Point 24}\textcolor{red}{
%has a left homotopy inverse for $0\leq m \leq 3$.
%}
Cohen and Wu \cite{MR1320988} showed that 
if $n\geq 4$ and $n\equiv 1 \pmod 2 $,
then $\pi_{4n-2}(P^{2n}(2))$ has a $\mathbb{Z}/8\mathbb{Z}$--summand.
Therefore,
if $(4+8k)n+(2k+1)m-3k\equiv 2 \pmod 4$,
then $\pi_*(\Omega P^{(4+8k)n+(2k+1)m-3k}(2))$ has a $\mathbb{Z}/8\mathbb{Z}$--summand,
The statement follows when we set $m=0,1,2,3$.
\end{proof}
Next we remark that Beben and Wu's result \cite[Proposition 5.2]{mrth} 
can be combined with Corollary \ref{coro1} to give a uniform formula.
First recall Beben and Wu's result.
\begin{prop}{\rm \cite[Proposition 5.2]{mrth}}
\label{prop-pb}
Let $X$ be the $p$--localization of a suspended CW--complex. Set $V=\bar H_*(X;\mathbb{Z}/p\mathbb{Z})$.
Let $M$ denote the sum of the degrees of the generators of $V$.
Define the sequence of integers $b_i$ recursively, with $b_0=0$ and
\begin{align*}
	b_i = (1+dim V) b_{i-1} + M.
\end{align*}
Let $V=\bar H_*(X;\mathbb{Z}/p\mathbb{Z})$, $1<dim V <p-1$ and $V_{odd}=0$ or $V_{even}=0$. Assume $X$ is $(m-1)$--connected for some $m\geq 1$.
Then for each $j$,
the stable homotopy group $\pi_j^s(\Sigma X)$ is a homotopy retract of $\pi_{j+b_i}(\Sigma X)$ for $i$ large enough such that $j\leq b_i+2m$.
\begin{flushright}
$\Box$
\end{flushright}
\end{prop}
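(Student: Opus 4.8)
The plan is to run the $p$--local analogue of the argument of Propositions \ref{prop1} and \ref{prop2}, and then to iterate as in the proof of Theorem \ref{thm1}. Write the generators of $V=\bar H_*(X;\mathbb{Z}/p\mathbb{Z})$ in degrees $m_1,\dots,m_d$, so that $M=\sum_i m_i$; by hypothesis these all have the same parity and $2\le d<p-1$. Since $d+1<p$, the integer $d+1$ is a unit in $\mathbb{Z}_{(p)}$, so $\frac{1}{d+1}\tilde\beta_{d+1}$ is a $p$--local idempotent on $\Sigma X^{(d+1)}$ and defines the retract $\tilde L_{d+1}(X)$, with projection $p_{d+1}$ and inclusion $i_{d+1}$ as in Section 2.

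First I would establish the $d$--cell version of Proposition \ref{prop1}: a map $\alpha\co S^{M}\to X^{(d)}$ whose mod--$p$ Hurewicz image is the length--$d$ iterated commutator $c=[[\dots[x_1,x_2],\dots],x_d]$ of the generators. The single--parity hypothesis is what makes $c$ a nonzero primitive of degree $M$ with no cancellation among its summands in the tensor algebra, and, because $X$ is a suspension, $c$ is carried by an iterated Whitehead product; one then desuspends this onto $X^{(d)}$ cell by cell by the switching--map/degree argument used in Proposition \ref{prop1}. Smashing $\alpha$ with one more copy of $X$ and composing with $p_{d+1}$ produces, in analogy with the maps $\phi_k$ and $\varphi_k$ of Proposition \ref{prop2}, a candidate retraction pair
\begin{align*}
\Sigma^{1+M}X\simeq\Sigma X\wedge S^{M}\xrightarrow{\ \mathrm{id}\wedge\alpha\ }\Sigma X^{(d+1)}\xrightarrow{\ p_{d+1}\ }\tilde L_{d+1}(X),
\end{align*}
and I would verify on mod--$p$ homology, by a Jacobi--identity computation formally identical to the displayed one in Proposition \ref{prop2} (now tracking the odd--primary signs, which the single--parity assumption controls), that its composite with the corresponding left map is an isomorphism. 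By finite type, $\Sigma^{1+M}X$ is then a homotopy retract of $\tilde L_{d+1}(X)$, hence $\Omega\Sigma^{1+M}X$ is a homotopy retract of $\Omega\tilde L_{d+1}(X)$.

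Then I would invoke the $p$--local loop--space decomposition underlying Proposition \ref{thm-wudecom} to conclude that $\Omega\tilde L_{d+1}(X)$ is itself a homotopy retract of $\Omega\Sigma X$; this is the step where $\dim V<p-1$ is genuinely used, since it places the length--$(d+1)$ Lie functor in the range $d+1<p$ in which the corresponding natural summand of $\Sigma X^{(\bullet)}$ survives after looping. Composing, $\Omega\Sigma^{1+M}X=\Omega\Sigma^{M}(\Sigma X)$ is a homotopy retract of $\Omega\Sigma X$. Now $\Sigma^{M}X$ is again a $p$--local suspended CW--complex whose reduced mod--$p$ homology has dimension $d$ and lies in one parity, so the same construction applies to it; since the sum of generator degrees of $\Sigma^{s}X$ is $\dim V\cdot s+M$, iterating the construction $i$ times starting from $X$ accumulates a suspension $b_i$ with $b_0=0$ and $b_i=(\dim V+1)b_{i-1}+M$, and gives that $\Omega\Sigma^{b_i}(\Sigma X)$ is a homotopy retract of $\Omega\Sigma X$ for every $i$.

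Finally, applying $\pi_{j+b_i-1}(-)$ to this retraction shows $\pi_{j+b_i}(\Sigma^{b_i}\Sigma X)$ is a direct summand of $\pi_{j+b_i}(\Sigma X)$; and since $X$ is $(m-1)$--connected, $\Sigma^{b_i}\Sigma X$ is $(b_i+m)$--connected, so the Freudenthal suspension theorem yields $\pi_{j+b_i}(\Sigma^{b_i}\Sigma X)\cong\pi^s_{j+b_i}(\Sigma^{b_i}\Sigma X)\cong\pi^s_j(\Sigma X)$ whenever $j+b_i\le 2(b_i+m)$, i.e. $j\le b_i+2m$, which holds for all large $i$ because $b_i\to\infty$. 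Combining these two facts gives the assertion. I expect the middle step to be the main obstacle: showing that the length--$(d+1)$ Lie retract of $\Sigma X^{(d+1)}$ remains a retract of $\Omega\Sigma X$ after looping needs the full odd--primary functorial decomposition of $\Omega\Sigma(-)$ (cf.\ \cite{MR2221079, MR1650426}), and it is precisely there that the hypothesis $\dim V<p-1$ cannot be relaxed; a secondary, purely clerical difficulty is keeping track of signs in the odd--primary version of the homology computation of Proposition \ref{prop2}.
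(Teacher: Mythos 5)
Note first that the paper does not prove this proposition at all: it is quoted verbatim from Beben--Wu \cite{mrth} (their Proposition 5.2) and used as a black box, so the only fair comparison is with what their proof actually requires. Your outer skeleton is right and matches how the paper itself deduces Corollary \ref{coro1} from Theorem \ref{thm1}: establish a one-step retraction $\Omega\Sigma^{1+M}X$ off $\Omega\Sigma X$, iterate it (which produces exactly the recursion $b_i=(1+\dim V)b_{i-1}+M$), and finish with Freudenthal in the range $j\le b_i+2m$; that last bookkeeping is fine. The problem is that the one-step retraction, which is the entire content of Beben--Wu's theorem, is not actually established by your sketch. Two of its three ingredients do not generalize from Propositions \ref{prop1} and \ref{prop2} the way you assert. (a) The spherical class: Proposition \ref{prop1} is genuinely a two-cell argument (one attaching map, the skeleton $sk_{2m-1}(X\wedge X)$, and the nullhomotopy of $i\wedge f$ via the switch map); for $d\ge 3$ cells there is no ``cell by cell'' desuspension, and the existence of $\alpha\co S^{M}\to X^{(d)}$ carrying the length-$d$ commutator is an unproved claim, not a formal consequence. (Beben--Wu avoid it precisely by exploiting that $X$ is a suspension, so that elements of $\mathbb{Z}_{(p)}[S_n]$ act on $X^{(n)}$ and the splitting can be produced by idempotents/natural transformations rather than by a spherical class; the paper under review needs the Proposition \ref{prop1} trick only because its $X$ is not assumed to be a suspension.) (b) The left inverse: in Proposition \ref{prop2} the backward maps $\varphi_k$ are built from $p_3$ only because $\tilde L_3(X)\simeq\Sigma^{1+|u|+|v|}X$ when $\dim V=2$. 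For $\dim V=d\ge 3$ one has $\dim L_{d+1}(V)$ much larger than $d$ (e.g.\ $18$ versus $3$ when $d=3$), so $p_{d+1}$ does not land in $\Sigma^{1+M}X$ and there is no ``corresponding left map'' for your Jacobi-identity computation to test; constructing a geometrically realizable projection of $\tilde L_{d+1}(X)$ onto the copy of $\Sigma^{1+M}X$ spanned by the classes $[c,x_i]$ is exactly where Beben--Wu's representation-theoretic input (possible because $(d+1)!$ is invertible in $\mathbb{Z}_{(p)}$ when $d<p-1$) is needed, and it is missing from your sketch.

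(c) Finally, the looping step as you cite it is also too weak: the $p$-local analogue of Proposition \ref{thm-wudecom} supplies factors $\Omega\tilde L_{k}(X)$ only for $k$ \emph{prime} (different from $p$), whereas $d+1$ is composite already for $d=3$; you need the finer statement that $\Omega\tilde L_{n}(X)$ retracts off $\Omega\Sigma X$ for every $n<p$ (available from the natural coalgebra decomposition theory of \cite{MR1650426, MR2221079}, again using invertibility of $n!$), not merely ``the decomposition underlying Proposition \ref{thm-wudecom}''. You do flag the looping step as the main obstacle, which is partially right, but as written the sketch treats (a) and (b) as routine when they are precisely the substance of \cite{mrth}; so the proposal is a plausible outline of the iteration-plus-Freudenthal reduction, but it does not constitute a proof of the quoted proposition.
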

Notice that when we set $X=\wedge_{i=1}^{n} {X_i}$, all the $b_i$ in Corollary \ref{coro1} are the same as the $b_i$ in Proposition \ref{prop-pb}.
Combine Proposition \ref{prop-pb} and Corollary \ref{coro1} to obtain the follows.
\begin{thm}
\label{prop-combprop}
Let $X$ be the $p$--localization of a suspended CW--complex.
Set $V=\bar H_*(X;\mathbb{Z}/p\mathbb{Z})$.
Let $M$ denote the sum of the degrees of the generators of $V$.
Define the sequence of integers $b_i$ recursively by setting $b_0=0$ and
\begin{align*}
	b_i = (1+dim V) b_{i-1} + M.
\end{align*}
Assume $X$ is $(m-1)$--connected for some $m\geq 1$
and let $V=\bar H_*(X)$, 
If either one of the following is satisfied:
\begin{itemize}
	\item $1<dim V <p-1$, and $V_{odd}=0$ or $V_{even}=0$.
	\item $2=p=dim(W_i)$, and $X=\wedge_{i=1}^{n} {X_i}$ with $W_i=\bar H_*(X_i)$ for $1\leq i \leq n$.
\end{itemize}
Then for each $j$,
the stable homotopy group $\pi_j^s(\Sigma X)$ is a homotopy retract of $\pi_{j+b_i}(\Sigma X)$ for $i$ large enough such that $j\leq b_i+2m$.
\begin{flushright}
$\Box$
\end{flushright}
\end{thm}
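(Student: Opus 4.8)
The plan is to read Theorem \ref{prop-combprop} as the amalgamation of the two results recalled immediately before it: under the first bullet the conclusion is Beben and Wu's Proposition \ref{prop-pb}, and under the second it is Corollary \ref{coro1}. So I would split the proof into two cases according to which bullet is assumed, and in each case the only task is to check that the data --- the module $V$, the number $M$, the recursion defining $b_i$, and the connectivity $m$ --- are arranged exactly as in the result being quoted.

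In the first case, when $1<\dim V<p-1$ and $V_{odd}=0$ or $V_{even}=0$, there is nothing to prove: the recursion $b_0=0$, $b_i=(1+\dim V)b_{i-1}+M$ is literally the one in Proposition \ref{prop-pb}, the space $X$ is $(m-1)$--connected by hypothesis, and the asserted retraction of $\pi^s_j(\Sigma X)$ off $\pi_{j+b_i}(\Sigma X)$ for $i$ large with $j\leq b_i+2m$ is exactly the conclusion of that proposition. I would simply invoke it.

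In the second case I would set $Y=\wedge_{i=1}^{n}X_i$, so that $V=\bar H_*(Y;\mathbb{Z}/2\mathbb{Z})=\otimes_{i=1}^{n}W_i$ with each $\dim W_i=2$, and relabel the two generators of each $W_i$ as $u_i,v_i$ with $|u_i|\leq|v_i|$. The key step, already flagged in the remark preceding the theorem, is to check that the recursion defining $b_i$ reproduces term by term the sequence $b_k=\sum_{i=1}^{n}\frac{3^k-1}{2}(|u_i|+|v_i|)$ appearing in Corollary \ref{coro1}. Concretely, since one step of the iteration behind Theorem \ref{thm1} replaces each $X_i$ by $\Sigma^{|u_i|+|v_i|}X_i$, whose two generators again have degrees summing to $3(|u_i|+|v_i|)$, the recursion is governed by the multiplier $1+\dim W_i=3$ on each factor and the additive term $M=\sum_{i=1}^{n}(|u_i|+|v_i|)$; an induction on $i$ starting from $b_0=0$ then yields $b_i=\frac{3^i-1}{2}\sum_{i=1}^{n}(|u_i|+|v_i|)$, as required. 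Granting this identification and that $Y$ is $(m-1)$--connected, Corollary \ref{coro1} applied to $Y$ asserts precisely that $\pi^s_j(\Sigma Y)$ is a homotopy retract of $\pi_{j+b_i}(\Sigma Y)$ for every $i$ large enough that $j\leq b_i+2m$. Assembling the two cases completes the proof.

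I do not expect a genuine obstacle: the statement is a packaging of two previously established results, and the only arithmetic is the identification of the two descriptions of the sequence $b_i$ in the second case. The one thing to watch is bookkeeping --- that the indices of the recursively defined $b_i$ and of the $b_k$ of Corollary \ref{coro1} line up at the initial term, and that the connectivity hypothesis is applied to $Y$ itself, the individual $X_i$ entering only through the already-proved Theorem \ref{thm1}.
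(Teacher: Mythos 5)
Your proposal takes essentially the same route as the paper: the paper's entire proof is the remark preceding the theorem, namely that the statement is just Proposition \ref{prop-pb} under the first bullet and Corollary \ref{coro1} under the second, once the two descriptions of the sequence $b_i$ are identified. Your case split and invocation of the two quoted results is exactly that argument, and the connectivity bookkeeping (applying Corollary \ref{coro1} to $Y=\wedge_{i=1}^{n}X_i$ itself) is handled correctly.

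One point deserves a flag, though it is a defect you share with, and indeed inherit from, the paper. In the second case you assert that the recursion of the statement ``is governed by the multiplier $1+\dim W_i=3$ and the additive term $M=\sum_{i=1}^{n}(|u_i|+|v_i|)$.'' But the statement defines the recursion using $V=\bar H_*(\wedge_{i=1}^{n}X_i)=\otimes_{i=1}^{n}W_i$, so literally $\dim V=2^n$ and $M$ (the sum of the degrees of the $2^n$ generators of $V$) equals $2^{n-1}\sum_{i=1}^{n}(|u_i|+|v_i|)$. The literal recursion $b_i=(1+2^n)b_{i-1}+2^{n-1}\sum_k(|u_k|+|v_k|)$ produces the sequence $\tfrac{3^i-1}{2}\sum_k(|u_k|+|v_k|)$ of Corollary \ref{coro1} only when $n=1$; for $n\geq 2$ the two sequences differ already at $b_1$, and Corollary \ref{coro1} says nothing about the literal $b_i$. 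The paper's remark (``all the $b_i$ in Corollary \ref{coro1} are the same as the $b_i$ in Proposition \ref{prop-pb}'') makes the same conflation, so your write-up faithfully reproduces the paper's proof, weak point included; but to be rigorous you should either restrict to $n=1$ or state explicitly that in the second bullet the recursion is to be read with multiplier $1+\dim W_i=3$ and additive constant $\sum_{k=1}^{n}(|u_k|+|v_k|)$ (the sum of the degrees of the generators of the $W_k$, not of $V$), which is the sequence your induction correctly computes and the one Theorem \ref{thm1} actually supplies.
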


%%%%%%%%%%%%%%%%%%%%   End of main body of article
%
%                             References
%
%   BiBTeX users uncomment the following line:
%
\bibliographystyle{gtart}
\bibliography{cdata}
%

%\begin{thebibliography}

%\end{thebibliography}

\end{document}